\newcommand{\mcJ}{\mathcal{J}}
\newcommand{\mcC}{\mathcal{C}}
\newcommand{\mcD}{\mathcal{D}}
\newcommand{\mcG}{\mathcal{G}}
\newcommand{\mcL}{\mathcal{L}}
\newcommand{\mcX}{\mathcal{X}}
\newcommand{\mbR}{\mathbb{R}}
\newcommand{\mbRd}{{\mathbb{R}^d}}
\newcommand{\Uad}{\mathcal{U}_{ad}}
\newcommand{\Px}[2]{ \mathbf{P}_{#1} (#2)}
\newcommand{\px}[2]{ \mathbf{p}_{#1} (#2)}
\newcommand{\pic}[4]{  \subcaptionbox{SSIM: #1}
		{\includegraphics[width=#2\textwidth]{Results/#3/#4.png}}}
\newcommand{\picf}[5]{ \subcaptionbox{SSIM: #1}
		{\includegraphics[width=#2\textwidth]{Results/#3/#4/#5.png}}}
\newcommand{\picl}[4]{ 
		{\includegraphics[width=#1\textwidth]{Results/#2/#3/#4.png}}}
\newcommand{\pica}[4]{ \subcaptionbox{#1}
		{\includegraphics[width=#2\textwidth]{Results/#3/#4.png}}}
\newcommand{\picb}[4]{ \subcaptionbox*{#1}
		{\includegraphics[width=#2\textwidth]{Results/#3/#4.png}}}
\newcommand{\picc}[2]{ \subcaptionbox*{#1}
		{\hspace{#2\textwidth}}}
\def \Omg{{\Omega}}
\def \Omgi{{\Omg_I}}
\def \Omgf{{\Omg\cup\Omg_I}}  
\def \oOmg{{\overline\Omg}}
\def \alphab{{\boldsymbol\alpha}}
\def \taub{{\boldsymbol\tau}}
\def \lambdab{{\boldsymbol \lambda}}
\def \nub{{\boldsymbol \nu}}
\def \fb{\mathbf{f}}
\def \ub{\mathbf{u}}
\def \wb{\mathbf{w}}
\def \vb{\mathbf{v}}
\def \xb{\mathbf{x}}
\def \pb{\mathbf{p}}
\def \tb{\mathbf{t}}
\def \yb{\mathbf{y}}
\def \zerob{\mathbf{0}}
\def \ib{\boldsymbol{i}}
\def \jb{\boldsymbol{j}}
\def \etab{{\boldsymbol\eta}}
\def \veps{\varepsilon}
\def \wa{{w^*}}
\def \ua{{u^*}}
\def \za{{z^*}}
\newcommand{\weak}{\rightharpoonup}
\newtheorem{remark}{Remark}
\newcommand{\func}[3]{%
#1\colon  #2  \longrightarrow~#3}
\definecolor{blue(ncs)}{rgb}{0.0, 0.53, 0.74}
\begin{document}
\title{Bilevel parameter learning for nonlocal image denoising models}
\author{M. D'Elia\footnotemark[3] \and J.C. De los Reyes\footnotemark[1] \,\footnotemark[4] \and A. Miniguano-Trujillo\footnotemark[4]}
\renewcommand{\thefootnote}{\fnsymbol{footnote}}
\footnotetext[1]{Corresponding author, email: \url{juan.delosreyes@epn.edu.ec}}
\footnotetext[3]{Computational Science and Analysis, Sandia National Laboratories, CA, USA.}
\footnotetext[4]{Research Center on Mathematical Modelling (MODEMAT), Escuela Polit{\'e}cnica Nacional, Quito, Ecuador}
\renewcommand{\thefootnote}{\arabic{footnote}}

\maketitle

\begin{abstract}
We propose a bilevel optimization approach for the estimation of parameters in nonlocal image denoising models. The parameters we consider are both the fidelity weight and weights within the kernel of the nonlocal operator. In both cases we investigate the differentiability of the solution operator in function spaces and derive a first order optimality system that characterizes local minima. For the numerical solution of the problems, we use a second-order trust-region algorithm in combination with a finite element discretization of the nonlocal denoising models and we introduce a computational strategy for the solution of the resulting dense linear systems. Several experiments illustrate the applicability and effectiveness of our approach.
\end{abstract}

\section{Introduction}\label{sec:introduction}
Nonlocal image denoising has emerged in the last years as an important alternative in image processing, due to the fact that it enables the reconstruction of important image features, specially textures, by considering similar intensity patterns between pixels or patches in a given spatial neighbourhood or all over the whole image domain. Although originally the research was focused on the design of direct nonlocal filters (see, e.g., \cite{yaroslavsky1986digital,smith1997susan,tomasi1998bilateral}), more complex variational approaches based on energy functionals were proposed in recent years for the treatment of denoising problems \cite{gilboa2008nonlocal,gilboa2007nonlocal,lou2010image}. This variational framework enables the employment of additional modeling and analysis tools that have been used for image reconstruction tasks within a partial differential equation (PDE) setting. A related variational framework was also employed recently in \cite{Antil2017imagingSpectral}, where the authors use energies induced by (nonlocal) fractional differential operators. 

Nonlocal denoising operators are characterized by kernels that incorporate a lot of information from the image domain. The use of different kernels leads in fact to different outcomes, and tuning their parameters is usually a difficult task. In recent years bilevel optimization has been successfully utilized for the identification of optimal parameters in image processing \cite{de2013image,de2017bilevel,kunisch2013bilevel}. This attempt includes analytical as well as numerical studies, using both finite-dimensional \cite{kunisch2013bilevel,hintermuller2015bilevel} and PDE-constrained optimization approaches \cite{de2013image,de2017bilevel,hintermuller2017optimal}. 

In this paper we aim at extending the bilevel optimization methodology to nonlocal operators with integrable kernels used for image denoising. Similar to previous contributions, we consider a supervised learning framework and assume existence of a training set of clean and noisy images we can learn from. Using a variational setting similar to the one developed in \cite{d2014optimal,d2016identification}, {we analyze the resulting bilevel learning problems in function spaces. Differentiability properties of the solution mappings are investigated and necessary optimality conditions of Karush-Kuhn-Tucker type are derived. We consider both spatially-dependent functional weights and also scalar ones. However, the framework is also extendable to parameter functions that depend on a finite, yet possible large, number of coefficients, which is the most common approach in practice to avoid overfitting.}

To our knowledge, this is the first paper on bilevel optimization for nonlocal operators. In particular, the second part of the paper addresses the problem of nonlocal kernel identification, now subject of great interest in the nonlocal community, and provides an alternative to neural-networks-based algorithms \cite{pang2019fpinns}. As such, the impact of this work goes beyond image processing, providing a useful tool in the context of nonlocal optimization and control for a wide range of applications including fracture mechanics \cite{Ha2011,Littlewood2010,Silling2000},
anomalous subsurface transport \cite{Benson2000,Schumer2003,Schumer2001},
phase transitions \cite{Bates1999,Delgoshaie2015,Fife2003},
multiscale and multiphysics systems \cite{Alali2012,Askari2008},
magnetohydrodynamics \cite{Schekochihin2008},
and stochastic processes \cite{Burch2014,DElia2017,Meerschaert2012,MeKl00}. 

The paper is organized as follows. In Section \ref{sec:nonlocal} we briefly summarize results of the nonlocal vector calculus that will be useful throughout the paper and also introduce some nonlocal operators for image denoising. In Section \ref{sec:opt-wrt-lambda} we consider a bilevel optimization approach to optimize the fidelity weight, both spatially-dependent and scalar, for a Gaussian denoising problem and derive necessary optimality conditions in form of Karush-Kuhn-Tucker optimality systems. In Section \ref{sec:opt-wrt-w}, we introduce and analyze the bilevel problem of finding optimal weights of a modified \emph{nonlocal means} kernel and, after studying some properties of the solution mapping, we derive necessary  optimality conditions of first order. Finally, in Section \ref{sec:numerical-tests}, we introduce a second-order optimization algorithm for the solution of the bilevel problems and give insights of implementation aspects and numerical performance. Several numerical tests illustrate the performance of our approach.

\section{Preliminaries in nonlocal imaging} \label{sec:nonlocal}
Let $\Omg$ be a bounded domain in $\mbRd$. We use the standard notation $(\cdot,\cdot)_{0,\Omg}$ and $\|\cdot\|_{0,\Omg}$ for the inner product and the norm in $L^2(\Omg)$, the space of square integrable functions on $\Omg$.

\subsection{Nonlocal vector calculus}\label{sec:nlvc}
The nonlocal models considered in this paper are analyzed using the nonlocal vector calculus \cite{Du_12_SIREV}. We recall the basic concepts of such calculus that will be used in this paper. {We let $\alphab(\xb,\yb)=-\alphab(\yb,\xb):\mbRd\times\mbRd\to\mbRd$ be an anti-symmetric vector function and $\gamma(\xb,\yb)=\gamma(\yb,\xb):\mbRd\times\mbRd\to\mbR$ be a symmetric positive kernel, square integrable over $\Omg$. Given the functions $u(\xb):\mbRd\to\mbR$ and $\nub(\xb,\yb):\mbRd\times\mbRd\to\mbRd$, we} define the nonlocal divergence of $\nub$ as a mapping $\mcD\nub:\mbRd\to\mbR$ such that
\begin{equation}\label{eq:ndiv}
	\mcD\nub(\xb) := \int_{\mbRd} \big(\nub(\xb,\yb)+\nub(\yb,\xb)\big)\cdot\alphab(\xb,\yb)\,d\yb,\qquad \xb\in\mbRd,
\end{equation}
and the nonlocal gradient of $u$ as a mapping $\mcG u:\mbRd\times\mbRd\to\mbR$ such that
\begin{equation}\label{eq:ngra}
	\mcG u(\xb,\yb) := \big(u(\yb)-u(\xb)\big)  \alphab(\xb,\yb), \quad \forall \;\xb,\yb\in\mbRd.
\end{equation}
The paper \cite[\S3.2]{Du_12_SIREV} shows that the adjoint $\mcD^*=-\mcG$, as in the local case. The composition of nonlocal divergence and gradient gives
\begin{displaymath}
	\mcD\big(\mcG u)(\xb)  = 2\int_{\mbRd}\big(u(\yb)-u(\xb)\big) \big( \alphab(\xb,\yb)\cdot\alphab(\xb,\yb)\big) \,d\yb.
\end{displaymath}
With the identification $\gamma(\xb,\yb):=\alphab(\xb,\yb)\cdot\alphab(\xb,\yb)$ we define the nonlocal diffusion of $u$ as the operator $\mcL u :\mbRd\to\mbR$ such that
\begin{displaymath}
	\mcL u(\xb) := \mcD\big(\mcG u)(\xb)=
	2\int_\mbRd \big(u(\yb)-u(\xb)\big) \, \gamma (\xb, \yb )\,d\yb, \quad \forall \;\xb \in \mbRd.
\end{displaymath}
Then, we define the interaction domain $\Omgi$ of a bounded region $\Omg$ as the set of points outside of the domain that interact with points inside of the domain, i.e.
\begin{displaymath}
	\Omgi = \{\yb\in\mbRd\setminus\Omg: \; \gamma(\xb,\yb)\neq 0, \; \textrm{ for some}\,\xb\in \Omg\}.
\end{displaymath}
This set is the nonlocal counterpart of the boundary $\partial\Omg$ of a domain in a local setting. In this work we consider localized kernels, i.e. $\gamma$ is such that for $\xb\in\Omg$
\begin{equation}\label{eq:gamma-cond}
	\left\{\begin{aligned}
	\gamma(\xb,\yb)  \geq  0 \quad &\forall\, \yb\in B_\veps(\xb),
	\\
	\gamma(\xb,\yb)  = 0 \quad & \forall\, \yb\in \mbRd \setminus B_\veps(\xb),
	\end{aligned}\right.
\end{equation}
where $B_\veps(\xb) = \{\yb\in \mbRd: \; |\xb-\yb|_\infty\leq\veps\}$, for all $\xb\in\Omg$ and $\veps>0$ is referred to as interaction radius\footnote{Note that, in general, nonlocal neighborhoods are Euclidean balls. However, the nonlocal calculus still holds for more general balls such as those induced by the $\ell$-infinity norm (see an application in \cite{Capodaglio2019}).}. For such kernels, we can rewrite the interaction domain as
\begin{displaymath}
	\Omgi = \{ \yb\in \mbRd\setminus\Omg: \; |\yb-\xb|_\infty\leq\veps,\;\textrm{ for some}\,\xb\in\Omg\}.
\end{displaymath}
We define the nonlocal energy semi-norm, the nonlocal energy space and the constrained nonlocal energy space as follows
\begin{equation}\label{eq:norm-space-def}
\begin{array}{l}
\|v\|^2_V  := \displaystyle \int_\Omgf \int_\Omgf \big( v(\xb)-v(\yb) \big)^2 \gamma(\xb,\yb)\,d\yb d\xb,
	             \quad \forall \, v\in L^2(\Omg), \\[4mm]
V(\Omgf)   := \{v\in L^2(\Omgf): \|v\|_V<\infty\},\\[3mm]
V_c(\Omgf) := \{v\in V(\Omgf): v|_{\Omega_I}=0\}.
	\end{array}
\end{equation}
The paper \cite[\S4.3.2]{Du_12_SIREV} proves that for integrable localized kernels as in \eqref{eq:gamma-cond} the constrained energy space $V_c(\Omgf)$ is equivalent to $$
L^2_c(\Omgf):=\{v\in L^2(\Omgf):v|_{\Omega_I}=0\}
$$
and that $\|\cdot\|_V\sim\|\cdot\|_{L^2(\Omgf)}$. Unless necessary, we drop the dependence of $V$ and $V_c$ on $\Omgf$.

{\bf Nonlocal volume constrained problems} We consider the solution of nonlocal elliptic problems, i.e., the nonlocal counterpart of elliptic PDEs. Due to nonlocality, when solving a nonlocal problem, boundary conditions (i.e. conditions on the solution for $\xb\in\partial\Omega$) do not guarantee the uniqueness of the solution, which can only be achieved by providing conditions on the interaction domain $\Omega_I$ \cite{Du_12_SIREV}. As an illustrative example, we consider the following {\it nonlocal diffusion-reaction equation} for the scalar function $u$:
\begin{align}\label{eq:diffusion-reaction}
-\mcL u + \lambda u &= f  \qquad \xb\in\Omega,
%
\intertext{for some $f\in L^2(\Omega)$ and $\lambda\in L^\infty(\Omega)$ such that $\lambda:\Omega\to\mathbb R^+ \cup \{0\}$. Uniqueness of $u$ is guaranteed provided the following condition is satisfied \cite{Du_12_SIREV}:}
%
\label{eq:nBC}
u &= g  \qquad \text{ for }\xb\in\Omega_I,
\end{align}
where $g$ is some known function in the {\it trace space}
$$
\widetilde V(\Omega_I)=\{z:\exists \; v\in V\;\; {\rm s.t.} \;\; v|_{\Omega_I}=z\}.
$$
Without loss of generality, in our analysis we consider $g=0$ so that $u\in V_c(\Omega\cup\Omega_I)$.
The corresponding weak form is obtained in the same way as in the local setting by multiplying \eqref{eq:diffusion-reaction} by a test function and integrating over $\Omega$, i.e.
\begin{equation}\label{eq:weak-diffusion-reaction}
\int_\Omega (-\mcL u +\lambda u - f)v\,d\xb =
\int_{\Omega\cup\Omega_I}\int_{\Omega\cup\Omega_I} \mcG u \,\mcG v \,d\xb\,d\yb
+ \int_\Omega (\lambda u - f)v\,d\xb= 0,
\end{equation}
where the equality follows from the nonlocal Green's {first} identity \cite{Du_12_SIREV}. Note that, by definition of $\mcG$, \eqref{eq:weak-diffusion-reaction} is equivalent to
\begin{equation}\label{eq:weak-expl-diffusion-reaction}
\int_{\Omega\cup\Omega_I}\int_{\Omega\cup\Omega_I} (u(x)-u(y))(v(x)-v(y))\gamma(\xb,\yb)\,d\xb\,d\yb
+ \int_\Omega (\lambda u - f)v\,d\xb = 0.
\end{equation}

\subsection{Nonlocal denoising formulation}\label{sec:imaging-problem}
In order to use the nonlocal vector calculus for image denoising models, we consider the variational viewpoint proposed in \cite{gilboa2007nonlocal}, applicable to the following kernels:
\begin{itemize}
    \setlength{\itemindent}{-.2in}
    \setlength{\itemsep}{1em}
  \item Yaroslavsky kernel:
  $
  \gamma_1(\xb,\yb)= \exp\left\{-w \big(f(\xb)-f(\yb)\big)^2\right\}\,\mcX(\yb\in B_\veps(\xb)),$
  \item Nonlocal Means kernel:
  \begin{equation} \label{model: nonlocal means}
  \gamma_2(\xb,\yb)= \exp\left\{- \int_{\mbR^2} w(\taub)\big(f(\xb+\taub)-f(\yb+\taub)\big)^2\,d\taub\right\}\,\mcX\big(\yb\in B_\veps(\xb)\big),
  \end{equation}
  \item Combination of the previous two kernels:
  \begin{multline}\label{eq:w-combination}
  \gamma_C(\xb,\yb) = \exp\left\{-\theta _1 \big(f(\xb)-f(\yb)\big)^2 \right.\\
  \left. -\theta_2 \int_{\mbR^2} w(\taub)\big(f(\xb+\taub)-f(\yb+\taub)\big)^2\,d\taub\right\}\,\mcX\big(\yb\in B_\veps(\xb)\big),
  \end{multline}
\end{itemize}
where $f$ is a given noisy image and $\mcX(\cdot\in B)$ is the indicator function over the set $B$. In \cite{buades2011self} it is shown that nonlocal means presents advantages in presence of textures or periodic structures, whereas neighborhood filters, like the Yaroslavsky one, may perform better for the preservation of particular edges. As a consequence, a kernel that considers a combination of both, as in \eqref{eq:w-combination}, may provide an increased denoising capability. We refer to \cite{buades2005review} for more details on these and other nonlocal kernels.

For a given kernel function, we formulate the nonlocal denoising problem as the following energy minimization problem:
\begin{equation}\label{eq:denoising-problem}
	\min_{u\in V_c} \quad  \frac{\mu}{2} \|u\|_V^2 +\frac12 \int_\Omg \lambda \, (u-f)^2 \,d\xb,
\end{equation}
where $f \in L^\infty(\Omega)$ stands for the noisy image and $\lambda$ is a weight that balances the fidelity term against the nonlocal regularizer. The weight $\lambda$ can be either a {non-negative} real number or a spatially dependent quantity. {The last term in \eqref{eq:denoising-problem} corresponds to the gaussian fidelity, which is considered for simplicity along the paper. However, the approach may be easily extended to other convex differentiable fidelity functions.}

{Depending on the chosen kernel (and the value of $w$ within) and the value of $\lambda$, reconstructed images of different quality are obtained as solutions to \eqref{eq:denoising-problem}. As an example, for a given pair $(u_T,f)$ of noisy and ground-truth images and using the nonlocal means kernel \eqref{model: nonlocal means} in problem \eqref{eq:denoising-problem}, we show in Figure \ref{fig:Level-curves} the contour lines of the loss function $\frac{1}{2} \|u_{\lambda,w}- u_T\|^2_{0,\Omega}$ associated with a scalar fidelity weight $\lambda$ and a scalar kernel weight $w$}. This two-dimensional plot exemplifies the difficulties related to the optimization. In fact, the complex banana-shaped contour lines are a challenge for several minimization algorithms, especially first-order ones.
\begin{figure}[H]
 \centering
 \includegraphics[width=0.8\textwidth]{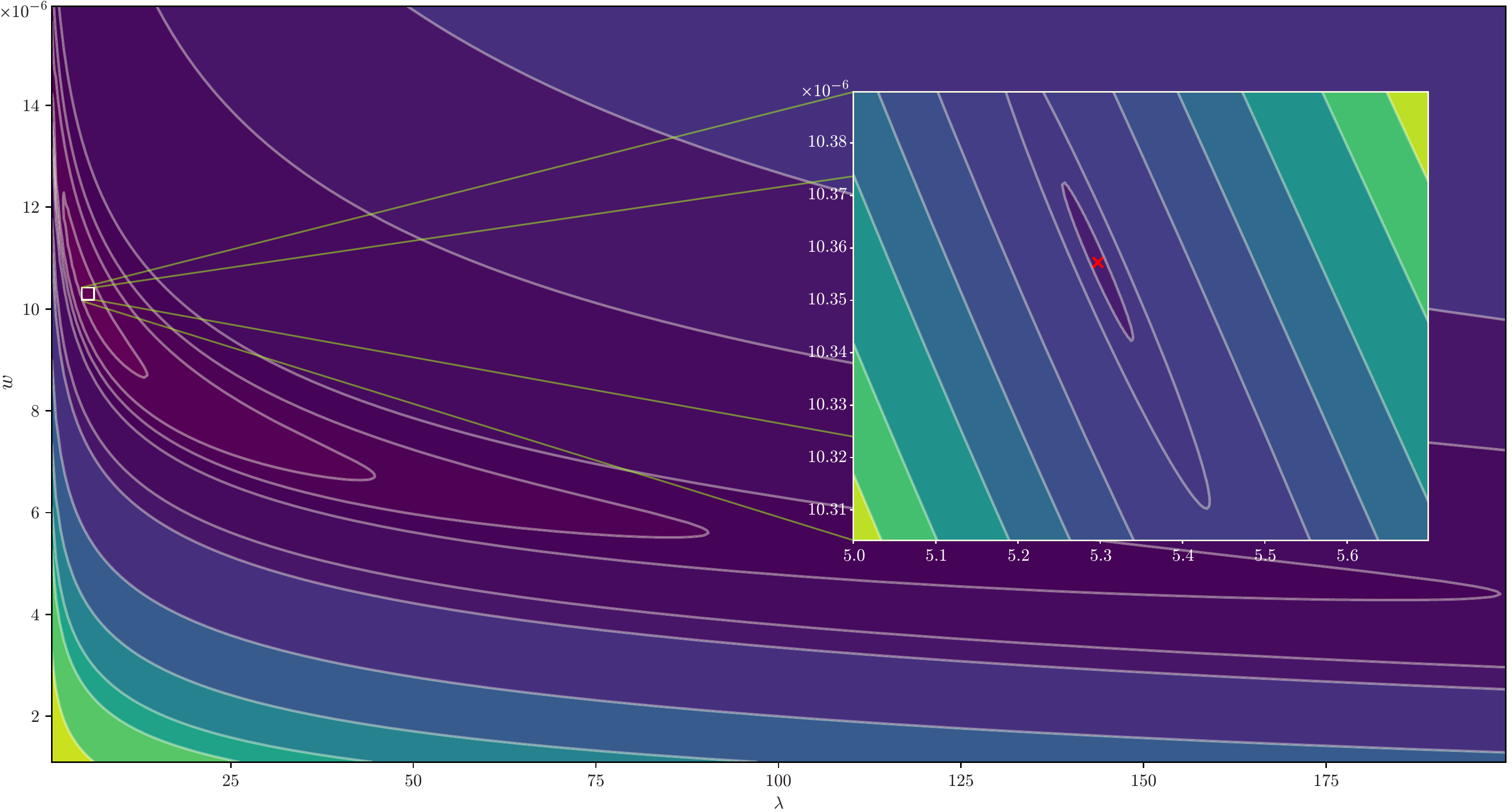}
 \\[-0.3em]
 \caption{Contour plot of the {$L^2$-squared} loss function for different values of the parameters $\lambda$ and $w$, using the nonlocal means kernel.}
 \label{fig:Level-curves}
\end{figure}

\section{Optimization with respect to $\lambda$}\label{sec:opt-wrt-lambda}
 We study the problem of identifying the optimal spatially dependent $\lambda$ in the lower-level denoising model \eqref{eq:denoising-problem}. First, we analyze the existence of a solution to the lower-level problem with fixed parameters. Then, we state the bilevel problem for the identification of the optimal $\lambda$ and study the differentiability of the solution operator and the reduced cost functional. We also derive a first order optimality system for the estimation of the optimal parameter. The case $\lambda\in\mathbb R^+ \cup \{0\}$ is studied at the end of the section as a particular instance.

\subsection{Lower-level problem}\label{sec:lower-level-lambda}
We recall the energy-based formulation of the nonlocal denoising problem:
\begin{equation}\label{eq:lower-level}
	\min_{u\in V_c} \mathcal E(u,\lambda),
\end{equation}
where
\begin{equation*}
	 \mathcal E(u,\lambda)= \frac{\mu}{2} \|u\|_V^2 +\frac12 \int_\Omg \lambda (u-f)^2 \,d\xb.
\end{equation*}
The energy norm \(\|\cdot\|_V\) is defined as in \eqref{eq:norm-space-def} and, in particular, is induced by the scalar product
\begin{equation}\label{ec:non_local_product}
\begin{aligned}
(u,v)_V  
& = \int_\Omgf \int_\Omgf \big( u(\xb)-u(\yb) \big) 
    \big( v(\xb)-v(\yb) \big) \gamma(\xb,\yb)\,d\yb d\xb \\
& = 2 \int_\Omg \int_\Omgf \big( u(\xb)-u(\yb) \big) v(\xb) 
    \gamma(\xb,\yb)\,d\yb d\xb, \quad \forall \, u,v\in V_c.
\end{aligned}
\end{equation}
In what follows we refer to \eqref{eq:lower-level} as the {\it lower-level problem} and we study its well-posedness as well as a necessary and sufficient conditions for the characterization of its minima.
\begin{theorem}\label{thm:lower-level-wellp}
	For every $\lambda \in L^\infty(\Omega)$, such that $ \lambda(\xb) \geq 0$ a.e., there exists a unique solution $u \in V_c$ for the lower-level problem \eqref{eq:lower-level}.
\end{theorem}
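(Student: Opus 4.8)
The plan is to treat the lower-level problem as the minimization of a strictly convex, coercive quadratic functional over the Hilbert space $V_c$, and to obtain existence and uniqueness by applying the Lax--Milgram theorem to its first-order optimality condition. Throughout I assume $\mu>0$, as befits a regularization parameter.

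First I would record the variational structure. Since $V_c$ is a Hilbert space with inner product $(\cdot,\cdot)_V$ from \eqref{ec:non_local_product}, and since $\mathcal E(\cdot,\lambda)$ is quadratic, its directional derivative at $u$ in the direction $v\in V_c$ is
$$\mathcal E'(u,\lambda)[v]=\mu\,(u,v)_V+\int_\Omg\lambda\,(u-f)\,v\,d\xb.$$
Hence the necessary condition $\mathcal E'(u,\lambda)[v]=0$ for all $v\in V_c$ is exactly the linear variational problem $a(u,v)=\ell(v)$ for all $v\in V_c$, with
$$a(u,v):=\mu\,(u,v)_V+\int_\Omg\lambda\,u\,v\,d\xb,\qquad \ell(v):=\int_\Omg\lambda\,f\,v\,d\xb.$$

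Next I would verify the Lax--Milgram hypotheses, relying on the norm equivalence $\|\cdot\|_V\sim\|\cdot\|_{L^2(\Omgf)}$ that holds for the integrable localized kernels \eqref{eq:gamma-cond}. Boundedness of $a$ follows from Cauchy--Schwarz on the first term and from $|\int_\Omg\lambda\,u\,v\,d\xb|\le\|\lambda\|_{L^\infty(\Omega)}\,\|u\|_{0,\Omg}\,\|v\|_{0,\Omg}$ on the second, the $L^2$ norms being dominated by $\|\cdot\|_V$ via the equivalence together with $\|\cdot\|_{0,\Omg}\le\|\cdot\|_{L^2(\Omgf)}$. Continuity of $\ell$ is analogous, using $f\in L^\infty(\Omega)\subset L^2(\Omega)$. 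The key step is coercivity, which is immediate: because $\lambda\ge0$ a.e.,
$$a(u,u)=\mu\,\|u\|_V^2+\int_\Omg\lambda\,u^2\,d\xb\ge\mu\,\|u\|_V^2,$$
so $a$ is coercive with constant $\mu$, uniformly in $\lambda$. Lax--Milgram then yields a unique $u\in V_c$ solving the optimality condition.

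Finally I would close the loop between the variational equation and the minimization problem. The functional $\mathcal E(\cdot,\lambda)$ is strictly convex, since its second variation in direction $v$ equals $\mu\,\|v\|_V^2+\int_\Omg\lambda\,v^2\,d\xb\ge\mu\,\|v\|_V^2>0$ for $v\neq0$, and it is coercive because $\mathcal E(u,\lambda)\ge\frac{\mu}{2}\|u\|_V^2$; consequently the unique solution of $a(u,v)=\ell(v)$ is precisely the unique global minimizer of \eqref{eq:lower-level}. I do not anticipate a genuine obstacle: coercivity is supplied for free by the regularizer and the sign condition $\lambda\ge0$, and the only point requiring care is that the fidelity bilinear form and the load $\ell$ are controlled in the energy norm, which rests entirely on the equivalence $\|\cdot\|_V\sim\|\cdot\|_{L^2(\Omgf)}$ recalled in Section~\ref{sec:nlvc}. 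One could equally invoke the direct method---coercivity plus weak lower semicontinuity give existence, strict convexity gives uniqueness---but the Lax--Milgram route has the advantage of simultaneously producing the optimality system used later.
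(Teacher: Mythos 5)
Your proof is correct, but it follows a genuinely different route from the paper's. The paper argues by the direct method: boundedness of $\mathcal E$ from below gives a minimizing sequence, coercivity of the energy term gives boundedness in $V_c$ and hence a weakly convergent subsequence, convexity plus continuity give weak lower semicontinuity so the weak limit is a minimizer, and strict convexity gives uniqueness. You instead pass to the Euler--Lagrange equation and apply Lax--Milgram to the bilinear form $a(u,v)=\mu(u,v)_V+\int_\Omg\lambda\,u\,v\,d\xb$, then use strict convexity of the quadratic functional to identify the unique solution of the variational equation with the unique minimizer of \eqref{eq:lower-level}. Your verifications are sound: coercivity with constant $\mu$ uniform in $\lambda\geq 0$, boundedness of $a$ and of the load via the norm equivalence $\|\cdot\|_V\sim\|\cdot\|_{L^2(\Omgf)}$ (which also guarantees completeness of $(V_c,\|\cdot\|_V)$, needed for Lax--Milgram, since $L^2_c$ is complete), and integrability of $\lambda f$ from $f\in L^\infty(\Omega)$ on a bounded domain. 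The trade-off between the two approaches is real: your route simultaneously establishes well-posedness of the state equation \eqref{eq:opc_denoising}, which the paper asserts separately in Section 3.2 (and uses again for \eqref{eq:rest_denoising}), and it yields a quantitative stability estimate in the spirit of \eqref{eq:stability-lower} for free from the coercivity constant; the paper's direct method, by contrast, does not exploit the linear-quadratic structure and hence extends verbatim to the more general convex differentiable fidelity terms the authors explicitly mention after \eqref{eq:denoising-problem}, where Lax--Milgram no longer applies. Since $a$ is symmetric, you could even replace Lax--Milgram by the Riesz representation theorem, but that is immaterial.
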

\begin{proof}
Since the functional $\mathcal E$ is bounded from below, there exists a minimizing sequence $\{u_n\}\subset V_c$. Thanks to the coercivity in $V_c$ of the energy term, the sequence is bounded in $V_c$; thus, there exists a subsequence, still denoted by $\{u_n\}$, that weakly converges in $V_c$, i.e., $u_n\weak u^*$. Since $\mathcal E$ is convex and continuous with respect to the energy norm, it is weak lower semi-continuous. Therefore,
$$\mathcal E(u^*) \leq \lim\inf_{n \to \infty} \mathcal E(u_n).$$
The uniqueness of the solution follows from the strict convexity of the functional.
\end{proof}

\subsection{Bilevel problem}\label{sec:bilevel-lambda}
We consider the following bilevel optimization problem
\begin{equation}\label{eq:lambda-opt}
\begin{aligned}
	\min_{\lambda\in\mcC}
	& \quad \mcJ(u,\lambda)= \ell(u)+ \frac{\beta}{2} \|\lambda\|^2_{H^1(\Omega)}\\[3mm] \hbox{s.t.}
	& \quad u = \arg\min_{u\in V_c}\;
	\left\{ \frac{\mu}{2}\|u\|^2_V +\int_\Omg \lambda (u-f)^2\,d\xb,\right \}
\end{aligned}
\end{equation}
where $\mcC = \big\{\lambda\in H^1(\Omega):0 \leq \lambda(\xb) \leq  \Lambda \big\}$, with $\Lambda \in \mathbb R^+$, {is a subset of the parameter space $\mathcal U:= H^1(\Omega) \cap L^\infty(\Omega)$. For any $\lambda \in \mathcal U$ such that $\lambda(\xb) \geq 0$ a.e. in $\Omega$, the denoising functional of the lower-level problem is strictly convex and its minimum is uniquely characterized by its first-order necessary and sufficient optimality condition, given by the nonlocal variational equation
\begin{equation}\label{eq:opc_denoising}
	\mu (u,\psi)_V + \big( \lambda(u-f),\psi \big)_{0,\Omg} =0,  \quad  \forall \psi\in V_c.
\end{equation}
By choosing in particular the test function $\psi=u$ in \eqref{eq:opc_denoising} we then get
\begin{align*}
	\mu \|u\|^2_V \leq  \mu \|u\|^2_V + \int_\Omg \lambda u^2 &{}=\int_\Omg \lambda f u \leq \|\lambda\|_{L^2} \|f\|_{\infty} \|u\|_{0,\Omg}.
\end{align*}
Using the equivalence of the energy norm and the $L^2$-norm, we then obtain the \emph{a-priori} estimate
\begin{equation}\label{eq:stability-lower}
	\|u\|_{0,\Omg}\leq K\|\lambda\|_{H^1(\Omega)},
\end{equation}
which will be used in the subsequent analysis of the differentiability of the solution operator.}

{Replacing the constraint in \eqref{eq:lambda-opt} with equation \eqref{eq:opc_denoising} then yields the following nonlocal-constrained optimization problem:
\begin{subequations}\label{eq:bilevel_reg_prob}
\begin{equation}\label{eq:opt_prob}
\min_{\lambda \in \mathcal C} \; \ell(u) + \frac{\beta}{2} \|\lambda\|^2_{H^1(\Omega)}
\end{equation}
\begin{equation}\label{eq:rest_denoising}
{\rm s.t.} \quad \mu (u,\psi)_V + \big(\lambda(u-f),\psi\big)_{0,\Omg}=0, \quad \forall \psi \in V_c.
\end{equation}
\end{subequations}
Well-posedness of \eqref{eq:rest_denoising} follows directly from Theorem \ref{thm:lower-level-wellp}, where the coercivity of the bilinear form $(\cdot,\cdot)_V$ plays a key role.}

Hereafter, the loss function $\ell(u)$ is assumed to be strictly convex and continuous with respect to $u$. The simplest case corresponds to the Peak Signal-to-Noise Ratio-related
loss function $\ell(u):=\frac12 \|u-u_T\|^2_{0,\Omg},$ which arises from a supervised learning framework, where $u_T$ corresponds to the ground truth image and $f$ to the corrupted one. In such framework, the training set is typically large (i.e. we assume several pairs $(u_T,f)$ are available) and the number of lower-level problems increases accordingly, but analytical difficulties remain the same. For this reason we restrict our attention to a single training pair $(u_T,f)$, which corresponds to a single lower-level problem. Alternative loss functions based on the image statistics have also been recently proposed \cite{hintermuller2017optimal} and may also been considered within our analytical framework.

\smallskip
\begin{theorem}
The bilevel optimization problem \eqref{eq:lambda-opt} admits a solution $\lambda^* \in\mcC$.
\end{theorem}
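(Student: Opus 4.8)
The plan is to prove existence of a minimizer $\lambda^* \in \mcC$ via the direct method of the calculus of variations. The reduced cost functional is $\widehat{\mcJ}(\lambda) := \mcJ(u(\lambda),\lambda) = \ell(u(\lambda)) + \frac{\beta}{2}\|\lambda\|^2_{H^1(\Omega)}$, where $u(\lambda)$ denotes the unique solution of the lower-level problem \eqref{eq:opc_denoising} guaranteed by Theorem \ref{thm:lower-level-wellp}. First I would observe that $\widehat{\mcJ}$ is bounded below by zero (since $\ell \geq 0$ and the regularizer is nonnegative), so that the infimum $m := \inf_{\lambda \in \mcC} \widehat{\mcJ}(\lambda)$ is finite; I would then take a minimizing sequence $\{\lambda_n\} \subset \mcC$ with $\widehat{\mcJ}(\lambda_n) \to m$.

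Next I would extract weak limits. Since $\mcC$ is bounded in $H^1(\Omega)$ (the constraint $0 \leq \lambda \leq \Lambda$ bounds the $L^\infty$, hence $L^2$, norm, and the $H^1$-regularization term in $\widehat{\mcJ}$ bounds the gradient along the minimizing sequence), reflexivity of $H^1(\Omega)$ yields a subsequence, still denoted $\{\lambda_n\}$, with $\lambda_n \weak \lambda^* $ weakly in $H^1(\Omega)$. I would check that $\lambda^* \in \mcC$: weak closedness follows because $\mcC$ is convex and closed in $H^1(\Omega)$ (the pointwise box constraints are preserved under $L^2$-limits via a.e.-convergent further subsequences coming from the compact embedding $H^1 \hookhookrightarrow L^2$), hence weakly closed. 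The crucial analytic step is the convergence of the states: using the \emph{a-priori} estimate \eqref{eq:stability-lower} the sequence $u_n := u(\lambda_n)$ is bounded in $V_c$, so $u_n \weak u^*$ in $V_c$ along a subsequence; passing to the limit in the state equation \eqref{eq:rest_denoising} — where the troublesome nonlinear coupling term $(\lambda_n u_n, \psi)_{0,\Omega}$ is handled by combining the weak $H^1$-convergence of $\lambda_n$ (hence strong $L^2$ by compact embedding) with the weak $L^2$-convergence of $u_n$ — identifies $u^* = u(\lambda^*)$.

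Finally I would close the argument by weak lower semicontinuity. The loss $\ell$ is convex and continuous, hence weakly lower semicontinuous, giving $\ell(u^*) \leq \liminf_n \ell(u_n)$; the $H^1$-norm term is weakly lower semicontinuous on $H^1(\Omega)$, giving $\|\lambda^*\|_{H^1} \leq \liminf_n \|\lambda_n\|_{H^1}$. Combining these yields $\widehat{\mcJ}(\lambda^*) \leq \liminf_n \widehat{\mcJ}(\lambda_n) = m$, and since $\lambda^* \in \mcC$ we conclude $\widehat{\mcJ}(\lambda^*) = m$, so $\lambda^*$ is a minimizer.

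I expect the main obstacle to be the passage to the limit in the bilinear coupling term of the state equation, specifically ensuring that $(\lambda_n u_n, \psi)_{0,\Omega} \to (\lambda^* u^*,\psi)_{0,\Omega}$. The product of two merely weakly convergent sequences need not converge to the product of the limits, so the argument hinges on upgrading the convergence of $\{\lambda_n\}$ to strong $L^2$-convergence through the compact embedding $H^1(\Omega) \hookhookrightarrow L^2(\Omega)$, which then pairs safely against the weakly convergent $\{u_n\}$. A secondary technical point is verifying that the limit state indeed solves \eqref{eq:rest_denoising} with parameter $\lambda^*$ (so that $u^* = u(\lambda^*)$ by uniqueness), which requires care because the test-function space $V_c$ and the kernel-weighted inner product $(\cdot,\cdot)_V$ couple the two domains $\Omega$ and $\Omega_I$.
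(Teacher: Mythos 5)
Your proposal is correct and follows essentially the same route as the paper's proof: a minimizing sequence whose $H^1(\Omega)$-boundedness comes from the Tikhonov term, weak $H^1$ (hence, by compact embedding, strong $L^2$) convergence of $\lambda_n$ to some $\lambda^*\in\mathcal C$, boundedness of the states $u_n$ via the \emph{a-priori} estimate \eqref{eq:stability-lower}, identification $u^*=u(\lambda^*)$ by passing to the limit in \eqref{eq:rest_denoising} using the strong--weak pairing of $\lambda_n$ against $u_n$, and weak lower semicontinuity of the cost to conclude. The only differences are expository: you spell out the weak closedness of $\mathcal C$ and the product-convergence argument that the paper leaves implicit, which is a faithful filling-in rather than a different approach.
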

\begin{proof}
Since the functional $\mcJ$ is bounded from below, there exists a minimizing sequence $\{\lambda_n\}\subset\mcC$. Also, the Tikhonov term guarantees that this sequence is bounded in $H^1(\Omega)$. Thus, there exists a subsequence, still denoted by $\{\lambda_n\}$, that converges weakly in $H^1(\Omega)$ and strongly in $L^2(\Omega)$ {to a limit point $\lambda^* \in \mcC$.}

Let $u_n\in V_c$ be the unique (see Theorem \ref{thm:lower-level-wellp}) optimal solution to the lower-level problem \eqref{eq:lower-level} corresponding to $\lambda_n$. From the stability estimate \eqref{eq:stability-lower} we get that
\begin{displaymath}
\|u_n\|_{0,\Omg}\leq K\|\lambda_n\|_{H^1(\Omega)} \leq \overline K,
\end{displaymath}
and, therefore, $\{u_n\}$ is uniformly bounded in $V_c$. Thus, there exists a subsequence, that we still denote by $\{u_n\}$, that weakly converges in $V_c$ (and $L_c^2$ because of the equivalence of spaces) {to a limit point $u^*$.}
Next, we will show that $u^*=u(\lambda^*)$, i.e. the limit of $\{u_n\}$ is the solution of the lower-level problem corresponding to $\lambda^*$. 

{Indeed, thanks to the linearity and continuity of the bilinear form and the strong convergence of $\lambda_n \to \lambda^*$ in $L^2(\Omega)$, we may pass to the limit in 
\begin{equation*}
 \mu (u_n,\psi)_V + \big(\lambda_n (u_n-f),\psi\big)_{0,\Omg}=0, \quad \forall \psi \in V_c,
\end{equation*}
and get that
\begin{equation*}
 \mu (u^*,\psi)_V + \big(\lambda^* (u^*-f),\psi\big)_{0,\Omg}=0, \quad \forall \psi \in V_c.
\end{equation*}
}

{Thanks to the assumed properties of the loss function, we get that $\mcJ$ is weakly lower semicontinuous and, consequently, $\mcJ(u^*,\lambda^*) \leq \lim\inf\limits_{n\to\infty} \mcJ(u_n,\lambda_n),$ which finishes the proof.}
\end{proof}

\subsection{Differentiability of the Solution Operator}
In this section we analyze the differentiability properties of the solution mapping. The presence of the weak norm in the denoising model, does not allow us to obtain Fr\'echet differentiability results. Fortunately, first-order optimality conditions only require G\^ateaux differentiability, which is proved in the following theorem.
\begin{theorem} \label{thm: diff solution operator lambda}
	Let $\mathcal V \in L^\infty(\Omega)$ be an $\epsilon$-neighbourhood containing $\mathcal C$ and $\func{S_\lambda}{\mathcal V}{V_C}$ be the solution operator, which assigns to each $\lambda$ the corresponding solution $u \in V_C$ to \eqref{eq:rest_denoising}. Then, $S_\lambda$ is locally Lipschitz continuous and G\^ateaux differentiable, and its directional derivative $z=S_\lambda' h$ at $\lambda$ in direction $h \in \mathcal U$ is given by the unique solution of
	\begin{equation} \label{eq: linearized equation lambda}
    \int_\Omgf \int_\Omgf \mu \, \mcG z\, \mcG \psi+ \int_\Omg\lambda\,z\,\psi 
    = {-\int_\Omg} h(u-f)\, \psi, \quad \forall \psi \in V_c.
    \end{equation}
\end{theorem}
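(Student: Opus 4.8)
The plan is to establish the three assertions in order: first the (local) Lipschitz bound for $S_\lambda$, then the well-posedness of the linearized equation \eqref{eq: linearized equation lambda} that furnishes the candidate derivative, and finally the convergence of the difference quotients to this candidate. The two structural tools I will use repeatedly are the coercivity of the energy inner product and the equivalence $\|\cdot\|_V\sim\|\cdot\|_{0,\Omg}$ recalled in Section \ref{sec:nlvc}.

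For Lipschitz continuity I would pick $\lambda_1,\lambda_2\in\mathcal V$ with states $u_i=S_\lambda(\lambda_i)$, subtract the two copies of \eqref{eq:rest_denoising}, and split the reaction term as $\lambda_1(u_1-f)-\lambda_2(u_2-f)=\lambda_1(u_1-u_2)+(\lambda_1-\lambda_2)(u_2-f)$. Testing the resulting identity with $\psi=u_1-u_2\in V_c$ and discarding the nonnegative term $\int_\Omg\lambda_1(u_1-u_2)^2$ (here $\lambda_1\ge 0$ a.e.), Hölder's inequality and the norm equivalence give $\mu\|u_1-u_2\|_V^2\le C\,\|\lambda_1-\lambda_2\|_{L^\infty(\Omg)}\,\|u_2-f\|_{0,\Omg}\,\|u_1-u_2\|_V$. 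Since the a priori estimate \eqref{eq:stability-lower} keeps $\|u_2-f\|_{0,\Omg}$ bounded uniformly over $\mathcal V$, I can cancel one factor of $\|u_1-u_2\|_V$ to obtain $\|u_1-u_2\|_V\le L\,\|\lambda_1-\lambda_2\|_{L^\infty(\Omg)}$ with $L$ independent of the points.

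Next I would define $z$ as the solution of \eqref{eq: linearized equation lambda} via Lax--Milgram: the form $a(z,\psi)=\mu(z,\psi)_V+(\lambda z,\psi)_{0,\Omg}$ is bounded and, because $\lambda\ge 0$, coercive with $a(z,z)\ge\mu\|z\|_V^2$, while $\psi\mapsto-\int_\Omg h(u-f)\psi$ is a bounded functional on $V_c$ since $h\in\mathcal U\subset L^\infty(\Omg)$, $f\in L^\infty(\Omg)$ and $u\in L^2(\Omg)$; testing with $\psi=z$ moreover shows $h\mapsto z$ is linear and bounded into $V_c$. To identify $z$ as the derivative I would set $\lambda_t=\lambda+th$, $u_t=S_\lambda(\lambda_t)$ and $z_t=t^{-1}(u_t-u)$; repeating the subtraction above with $(\lambda_1,\lambda_2)=(\lambda_t,\lambda)$ and dividing by $t$ shows that $z_t$ solves \eqref{eq: linearized equation lambda} with $u$ replaced by $u_t$ on the right-hand side. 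Subtracting the equation for $z$, testing the difference with $\psi=z_t-z$ and using coercivity and the norm equivalence yields $\|z_t-z\|_V\le C\mu^{-1}\|h\|_{L^\infty(\Omg)}\|u_t-u\|_{0,\Omg}$, and the Lipschitz bound drives the right-hand side to zero as $t\to 0$. Hence $z_t\to z$ in $V_c$, so $S_\lambda'(\lambda)h=z$.

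The one delicate point I anticipate is exactly this final limit passage. Because the integrable-kernel problem produces states only in $V_c\sim L^2_c$, with no gain of integrability, the product $h(u_t-u)$ in the difference-quotient equation must be controlled solely through the $L^\infty$ norm of the perturbation; this is just enough to pass to the limit along each fixed direction $h$, which is the structural reason one obtains Gâteaux rather than Fréchet differentiability. I would also check that $\lambda_t$ remains in the $\epsilon$-neighbourhood $\mathcal V$ for $|t|$ small, which is precisely why the statement is posed on $\mathcal V$ and not on $\mathcal C$ itself.
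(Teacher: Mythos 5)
Your proposal is correct and follows essentially the same route as the paper: the same add--subtract splitting of the reaction term (in the symmetric variant) tested with $\psi=u_1-u_2$ for the Lipschitz bound, and the same key estimate obtained by subtracting the linearized equation \eqref{eq: linearized equation lambda} from the difference-quotient equation and testing with $\psi=z_t-z$, which yields $\|z_t-z\|_V\leq C\|h\|_{L^\infty}\|u_t-u\|_{0,\Omg}\to 0$. Your only deviation is a mild streamlining: by defining $z$ up front via Lax--Milgram you prove strong convergence of the quotients directly, whereas the paper first extracts a weakly convergent subsequence $z_t\rightharpoonup z$ and identifies the limit before running the very same final estimate, a compactness step your argument shows is dispensable.
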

\begin{proof}
{To prove the Lipschitz continuity of the solution operator, we consider two parameters $\lambda_1,\lambda_2 \in \mathcal V$ and the corresponding solutions to equation \eqref{eq:opc_denoising}, $u_1$ and $u_2$. For $\epsilon$ small enough, equation \eqref{eq:rest_denoising} is indeed well-posed thanks to the coercivity of the bilinear form. Taking the difference of both equations we get
\begin{equation*}
 \mu (u_1-u_2,\psi)_V + \big(\lambda_1 (u_1-f),\psi\big)_{0,\Omg}- \big(\lambda_2 (u_2-f),\psi\big)_{0,\Omg}=0, \quad \forall \psi \in V_c.
\end{equation*}
Adding and subtracting the term $\big(\lambda_2 (u_1-f),\psi\big)_{0,\Omg}$ and choosing the test function $\psi = u_1-u_2$, we get that
\begin{equation*}
 \mu \|u_1-u_2\|^2_V + \big(\lambda_2 (u_1-u_2),u_1-u_2 \big)_{0,\Omg} = - \big((\lambda_1-\lambda_2) (u_1-f),u_1-u_2\big)_{0,\Omg}.
\end{equation*}
Thanks to the ellipticity constant $\mu$ and since $\lambda_2 \in \mathcal V$, we get the bound
\begin{equation*}
 \|u_1-u_2\|^2_V  \leq C \|\lambda_1-\lambda_2\|_\infty \|u_1-f\|_{0,\Omg} \|u_1-u_2\|_{0,\Omg}.
\end{equation*}
Considering in addition the equivalence between the energy norm and the $L^2$-norm and estimate
\eqref{eq:stability-lower}, we get a bound on $\|u_1-f\|_{0,\Omg}$ and, consequently,
\begin{equation*}
 \|u_1-u_2\|_V  \leq C_f \|\lambda_1-\lambda_2\|_\infty \|\lambda_1\|_\infty,
\end{equation*}
which implies the local Lipschitz continuity of the solution mapping.}

Let now $h \in \mathcal U$ {be a given direction}, and $u_t$ and $u$ the unique solutions to \eqref{eq:rest_denoising} corresponding to $\lambda+th$ and $\lambda$, respectively. For $\epsilon$ and $t$ small enough, equation \eqref{eq:rest_denoising} is well-posed. Throughout the proof, we let $C > 0$
denote a generic positive constant.

{By taking the difference of equations  \eqref{eq:rest_denoising} corresponding to $\lambda+th$ and $\lambda$}, we get
\[
\mu (u_t-u,\psi)_V + \big((\lambda+th)(u_t-f)-\lambda(u-f),\psi\big)_{0,\Omg}=0, \quad \forall \psi \in V_c
\]
or, equivalently, {expanding all terms},
\begin{multline}\label{eq:diff}
  \mu \int_\Omgf \int_\Omgf \big( (u_t-u)(\xb)-(u_t-u)(\yb) \big)\big( \psi(\xb)-\psi(\yb) \big)\gamma(\xb,\yb)\,d\xb \,d\yb\\
+ \int_\Omg\lambda(\xb) (u_t-u)(\xb)\psi(\xb)\,d\xb 
+ t\int_\Omg h(\xb)u_t(\xb)\psi(\xb)\,d\xb\\ =t \int_\Omg h(\xb)f(\xb)\psi(\xb)\,d\xb,  \quad \forall \psi \in V_c.
\end{multline}
{By choosing the test function $\psi=u_t-u$ and since $\lambda \in \mathcal V$, we obtain the estimate}
\begin{align*}
\|u_t-u\|^2_V
&{} \leq C \left| t \int_\Omg h(x)(f(x)-u_t(x))(u_t-u)(x)\,d\xb \right|	\\
&{} \leq C t\| h\|_{\infty}\|f-u_t\|_{0,\Omg}\|u_t-u\|_{0,\Omg},
\end{align*}
which implies, {using the equivalence of norms for integrable localized kernels and the \emph{a-priori} bound for the lower-level solution \eqref{eq:stability-lower}, that}
\begin{align*}
\|u_t-u\|_{0,\Omg} \leq C t\| h\|_{\infty} \big\{\|f\|_{0,\Omg}+\|u_t\|_{0,\Omg} \big\} \leq C t\| h\|_{\infty} \big\{\|f\|_{0,\Omg}+K(\|\lambda\|_{\infty}+t\|h\|_{\infty}) \big\}.
\end{align*}
Therefore, the sequence $\{z_t\}_{t>0} = \{\frac{u_t-u}{t}\}_{t>0}$ is bounded in $V_C$, {for $t$ sufficiently small}, and there exists a subsequence (still denoted by $\{z_t\}$) such that $z_t\rightharpoonup z$ weakly in $V_C$. From \eqref{eq:diff} we {get, subtracting the term $\int\limits_\Omega h u \psi$ on both sides,}
\begin{align*}
  \int\limits_\Omgf \int\limits_\Omgf \mu \, \mcG \left(\frac{u_t-u}{t}\right) \mcG \psi
+ \int\limits_\Omg\lambda\left(\frac{u_t-u}{t}\right) \psi + \int\limits_\Omg h(u_t-u) \psi ={-\int\limits_\Omg} h(u-f) \psi, ~\forall \psi \in V_c,
\end{align*}
which implies that
\begin{align*}
\int_\Omgf \int_\Omgf \mu \, \mcG z_t\, \mcG \psi+ \int_\Omg\lambda z_t \psi
+ \int_\Omg h(u_t-u) \psi =-\int_\Omg h(u-f) \psi,\quad \forall \psi \in V_c.
\end{align*}
Taking the limit as $t \rightarrow 0,$ we obtain
\begin{equation} \label{eq: diff proof linearized eq}
\int_\Omgf \int_\Omgf \mu \, \mcG z\, \mcG \psi+ \int_\Omg\lambda\,z\,\psi 
= {-\int_\Omg} h(u-f)\, \psi, \quad \forall \psi \in V_c,
\end{equation}
{which has a unique solution $z \in V_c$ thanks to the ellipticity of the energy bilinear term.}
{By subtracting equation \eqref{eq: diff proof linearized eq} from equation \eqref{eq:diff} we get}
\begin{equation*}
    \mu \left( \frac{u_t-u}{t}-z, \psi \right)_V + \int_\Omega \lambda \left( \frac{u_t-u}{t}-z \right) \psi
= - \int_\Omega h (u_t-u) \psi,\quad \forall \psi \in V_c.
\end{equation*}
Finally, by choosing $\psi= \frac{u_t-u}{t}-z$ we obtain the estimate
\begin{equation*}
\left\| \frac{u_t-u}{t}-z \right\|_V \leq C \|h\|_{L^\infty} \|u_t-u\|_{0,\Omg}.
\end{equation*}
The continuity of the solution operator implies that $\left\| \frac{u_t-u}{t}-z \right\|_V \to 0$ as $t \to 0$, which concludes the proof.
\end{proof}

\subsection{Optimality system}
Thanks to the G\^ateaux differentiability of the solution operator we are able to derive an optimality system for the characterization of local optimal solutions of \eqref{eq:bilevel_reg_prob}.
\begin{theorem}
Let $(u,\lambda) \in V_c \times \mathcal C$ be an optimal solution to problem \eqref{eq:bilevel_reg_prob}. There exists an adjoint state \( p \in L^2_c(\Omgf)\) and Lagrange multipliers $\mu_\Omega^+, \mu_\Omega^- \in L^2(\Omega)$ and $\mu_\Gamma^+, \mu_\Gamma^- \in H^{1/2}(\Gamma)$ such that the following optimality system is satisfied:
\begin{subequations}\label{eq:sys-lambda-opt}
\begin{equation}
\mu (u,\psi)_V + \big(\lambda(u-f),\psi\big)_{0,\Omg} =0, \forall \psi \in V_C,
\end{equation}
\begin{equation}
	\mu (p,\phi)_V + (\lambda\,p,\phi)_{0,\Omg} =- (\nabla \ell(u),\phi)_{0,\Omg}, 	\forall \phi \in V_C,
\end{equation}
\begin{equation}
\begin{aligned}
-\beta \Delta \lambda + \beta \lambda {+(u-f)p} &= \mu_\Omega^+ -\mu_\Omega^- 
 \quad \text{in }\Omega,\\
 \beta \frac{\partial \lambda}{\partial \vec n} &= \mu_\Gamma^+ - \mu_\Gamma^-
 \quad \text{on }\Gamma:= \partial \Omega,
\end{aligned}
\end{equation}
\begin{equation}
\begin{aligned}
&  0 \leq \mu_\Omega^+(\xb) \perp \lambda(\xb) \geq 0, 
&& 0 \leq \mu_\Omega^-(\xb) \perp (\Lambda-\lambda(\xb)) \geq 0, 
     \quad \forall \xb \in \Omega,\\
&  0 \leq \mu_\Gamma^+(\xb) \perp \lambda(\xb) \geq 0, 
&& 0 \leq \mu_\Gamma^-(\xb) \perp (\Lambda-\lambda(\xb)) \geq 0, 
     \quad \forall \xb \in \Gamma.
\end{aligned}
\end{equation}
\end{subequations}
\end{theorem}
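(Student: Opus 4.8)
The plan is to route the argument through the reduced cost functional, use an adjoint state to eliminate the linearized sensitivity, and then convert the resulting first-order variational inequality into the pointwise complementarity system. First I would introduce the reduced cost functional $j(\lambda) := \ell(S_\lambda\lambda) + \tfrac{\beta}{2}\|\lambda\|^2_{H^1(\Omega)}$, where $S_\lambda:\mcV\to V_c$ is the solution operator of Theorem~\ref{thm: diff solution operator lambda}. Since that theorem furnishes local Lipschitz continuity and G\^ateaux differentiability of $S_\lambda$ on the neighbourhood $\mcV\supset\mcC$, and $\ell$ is assumed convex, continuous and differentiable with $\nabla\ell(u)\in L^2(\Omega)$, the chain rule gives, for every admissible direction $h$,
\[
j'(\lambda)h = (\nabla\ell(u),z)_{0,\Omg} + \beta(\lambda,h)_{H^1(\Omega)},
\]
with $z = S_\lambda' h$ the unique solution of the linearized equation \eqref{eq: linearized equation lambda}.

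Next I would eliminate the sensitivity $z$ through an adjoint state. I define $p\in V_c$ as the unique solution (existence following from the same coercivity/Lax--Milgram argument underlying Theorem~\ref{thm:lower-level-wellp}) of
\[
\mu(p,\phi)_V + (\lambda\,p,\phi)_{0,\Omg} = -(\nabla\ell(u),\phi)_{0,\Omg}, \quad \forall\phi\in V_c,
\]
which is exactly the adjoint equation in \eqref{eq:sys-lambda-opt}. Testing the adjoint equation with $\phi=z$, testing \eqref{eq: linearized equation lambda} with $\psi=p$, and using the symmetry of both $(\cdot,\cdot)_V$ and the $L^2$ inner product, the two left-hand sides coincide, so $(\nabla\ell(u),z)_{0,\Omg} = \big((u-f)p,h\big)_{0,\Omg}$. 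Substituting back makes the reduced gradient independent of $z$:
\[
j'(\lambda)h = \big((u-f)p,h\big)_{0,\Omg} + \beta(\lambda,h)_{H^1(\Omega)}.
\]

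Since $\lambda$ minimizes $j$ over the closed convex set $\mcC$, the first-order condition is the variational inequality $j'(\lambda)(\tilde\lambda-\lambda)\geq 0$ for all $\tilde\lambda\in\mcC$, that is,
\[
\int_\Omg (u-f)p\,(\tilde\lambda-\lambda) + \beta(\lambda,\tilde\lambda-\lambda)_{H^1(\Omega)} \geq 0, \quad \forall\tilde\lambda\in\mcC.
\]
Integrating the $H^1$ term by parts transfers the operator onto $\lambda$, producing the interior contribution $\beta(-\Delta\lambda+\lambda)$ together with the boundary term $\beta\,\partial\lambda/\partial\vec n$ on $\Gamma$. Encoding the box constraint $0\leq\lambda\leq\Lambda$ by nonnegative multipliers then decouples this single inequality into an interior relation and a boundary relation, with $\mu_\Omega^\pm$ and $\mu_\Gamma^\pm$ supported on the active sets $\{\lambda=0\}$ and $\{\lambda=\Lambda\}$, which yields the gradient equation and the complementarity conditions of \eqref{eq:sys-lambda-opt}.

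The hard part will be the rigorous existence and regularity of these Lagrange multipliers. Because the admissible set is the intersection of $H^1(\Omega)$ with pointwise $L^\infty$ box bounds, this is an obstacle-type constraint, and a constraint qualification (of Zowe--Kurcyusz/Robinson type, or equivalently a Slater condition, which holds since any constant in $(0,\Lambda)$ is strictly feasible) is needed both to produce the multipliers and to justify splitting the variational inequality into separate interior $L^2(\Omega)$ and boundary $H^{1/2}(\Gamma)$ systems; verifying the claimed $L^2$ regularity of $\mu_\Omega^\pm$ additionally requires checking $(u-f)p\in L^2(\Omega)$, which uses the $L^\infty$ bound on $f$ and the boundedness of the state and adjoint. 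I would establish this either directly via a KKT theorem under the constraint qualification or, more constructively, by a Moreau--Yosida/penalization regularization of the constraints: derive the smooth optimality system for the penalized problem, obtain uniform bounds on the approximate multipliers, and pass to the limit to recover $\mu_\Omega^\pm$, $\mu_\Gamma^\pm$ and the complementarity relations.
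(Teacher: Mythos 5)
Your derivation of the reduced gradient is exactly the paper's: same reduced functional $j(\lambda)=\ell(S_\lambda\lambda)+\tfrac\beta2\|\lambda\|^2_{H^1}$, same adjoint equation, same duality pairing (testing the adjoint with $z$ and the linearized equation with $p$) to obtain $j'(\lambda)h=\big((u-f)p,h\big)_{0,\Omg}+\beta(\lambda,h)_{H^1}$, and the same variational inequality $j'(\lambda)(\tilde\lambda-\lambda)\geq 0$ over $\mcC$. Where you diverge is the final step, extracting the multipliers. The paper treats the variational inequality as a bilateral obstacle problem for the $H^1$ form and invokes elliptic regularity for such problems (it cites Troianiello, Thm.~5.2) to conclude $\lambda\in H^2(\Omega)$; with that regularity in hand, it simply integrates by parts, \emph{defines} $\mu_\Omega:=-\beta\Delta\lambda+\beta\lambda+(u-f)p\in L^2(\Omega)$ and $\mu_\Gamma:=\beta\,\partial\lambda/\partial\vec n\in H^{1/2}(\Gamma)$ as residuals, and reads off the sign and complementarity conditions from the VI before splitting into positive and negative parts. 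You instead propose an abstract KKT theorem under a Zowe--Kurcyusz/Slater constraint qualification, or a Moreau--Yosida penalization with a limit passage. Both are viable, but note a caveat with the first option: the abstract KKT theorem by itself only produces a multiplier in the dual space $H^1(\Omega)^*$, and the decomposition into an interior $L^2(\Omega)$ part and a boundary $H^{1/2}(\Gamma)$ part does not follow from the constraint qualification alone --- it is precisely the $H^2$ regularity of $\lambda$ (or an equivalent uniform bound in the penalized scheme) that licenses this splitting, so you would end up needing the obstacle-problem regularity result anyway. Your penalization alternative does address this, since $(u-f)p\in L^2(\Omega)$ (which you correctly flag must be checked, using $f\in L^\infty$ and the a priori bounds on $u$ and $p$) yields uniform $L^2$ bounds on the penalized multipliers and hence $H^2$ bounds on the iterates, but it is a longer road to the same regularity that the paper obtains in one citation. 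In short: identical main argument, with your multiplier-extraction step being more machinery-heavy where the paper's is a direct regularity-plus-residual construction.
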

\begin{proof}
Let us consider the reduced cost functional
\begin{equation}\label{red-funct-parameter}
	j(\lambda) := \ell(u(\lambda)) + \frac\beta2 \|\lambda\|_{H^1}^2,
\end{equation}
where $u(\lambda)$ is the unique solution to the state equation \eqref{eq:rest_denoising} corresponding to $\lambda$. Taking the derivative of the reduced cost with respect to $\lambda$, we get
\begin{equation}
	j'(\lambda)h= \big( \nabla \ell(u(\lambda)), u'(\lambda)h\big)_{0,\Omg}+ \beta (\lambda,h)_{H^1}, \quad \forall h \in \mathcal U,
\end{equation}
{where, according to Theorem \ref{thm: diff solution operator lambda}, $u'(\lambda)h$ is the unique solution of the linearized equation}
\begin{equation}
    \mu \big(u'(\lambda)h,\psi\big)_V + \big(\lambda u'(\lambda)h,\psi\big)_{0,\Omg}
= - \big(h(u-f),\psi\big)_{0,\Omg}, \quad \forall \psi\in V_C.
\end{equation}
Using the adjoint equation
\begin{equation}
\mu (p,\phi)_V + (\lambda p,\phi)_{0,\Omg}=-(\nabla \ell(u),\phi)_{0,\Omg}, \quad \forall \phi\in V_c,
\end{equation}
which is uniquely solvable by the same arguments as in Theorem \ref{thm: diff solution operator lambda}, and choosing $\psi = u'(\lambda)h$, we obtain that
\begin{equation}
j'(\lambda)h= - \mu \big( u'(\lambda)h,p \big)_V- \big( \lambda u'(\lambda)h,p\big)_{0,\Omg}+ \beta (\lambda,h)_{H^1}.
\end{equation}
By using the linearized equation we then obtain
\begin{equation}
j'(\lambda)h= \int_\Omega (u-f)p h ~dx + \beta (\lambda,h)_{H^1}.
\end{equation}
The box constraints on the parameter $\lambda$ imply that the a first order necessary optimality condition is given by the following variational inequality:
\begin{equation}\label{ec:sys-lambda-opt-vi}
	j'(\lambda)(h-\lambda)= \int_\Omega (u-f)p (h-\lambda) ~dx + \beta (\lambda,h-\lambda)_{H^1} \geq 0, \quad \forall h \in \mathcal C.
\end{equation}
The latter corresponds to an obstacle problem with bilateral bounds. Integration by parts then yields
\begin{align*}
  (\lambda,v)_{H^1}&= (\lambda,v)_{0,\Omg} + (\nabla \lambda,\nabla v)_{0,\Omg}\\
  &= (\lambda,v)_{0,\Omg} + \int_\Gamma \frac{\partial \lambda}{\partial \vec n} v ~d \Gamma - (\Delta \lambda,v)_{0,\Omg} \;\;\forall\, v\in H^1(\Omg),
\end{align*}
where the extra regularity $\lambda \in H^2(\Omega)$ follows from \cite[Thm.~5.2]{troianiello2013elliptic}. Consequently, the variational inequality \eqref{ec:sys-lambda-opt-vi} can be written in strong form as
\begin{align*}
  -\beta \Delta \lambda + \beta \lambda {+(u-f)p} &= \mu_\Omega \quad \text{in }\Omega,\\
  \beta \frac{\partial \lambda}{\partial \vec n} &= \mu_\Gamma \quad \text{on }\Gamma,
\end{align*}
where the multipliers $\mu_\Omega \in L^2(\Omega)$ and $\mu_\Gamma \in H^{1/2}(\Gamma)$ satisfy
\begin{equation*}
(\mu_\Omega, v- \lambda) \geq 0, \; \forall v \in \mathcal C, \quad \text{ and } \quad (\mu_\Gamma, v- \lambda) \geq 0,  \; \forall v \in \mathcal C,
\end{equation*}
or, equivalently,
\begin{align*}
& \big(\mu_\Omega(\xb), v- \lambda(\xb)\big) \geq 0, 
  \quad \forall v \in [0, \Lambda] \subset \mathbb R, \text{ a.e. in }\Omega\\
& \big(\mu_\Gamma(x), v- \lambda(x)\big) \geq 0, 
  \quad \forall v \in [0,\Lambda] \subset \mathbb R, \text{ a.e. in }\Gamma.
\end{align*}
By decomposing $\mu_\Omega$ and $\mu_\Gamma$ in its positive and negative parts, we have
\begin{align*}
\mu_\Omega & = \mu_\Omega^+ - \mu_\Omega^-,\\
\mu_\Omega^+ & \geq 0, & \lambda(\xb) &\geq 0, &
\mu_\Omega^+(x) \lambda(x) &=0,\\
\mu_\Omega^- &\geq 0, & \lambda(x) &\leq \Lambda, &
\mu_\Omega^-(x) \big(\Lambda-\lambda(x) \big) &=0,
\end{align*}
and similarly for $\mu_\Gamma$.
\end{proof}

\subsection{The scalar parameter case}
When $\lambda\in\mbR^+ \cup \{0\}$, the Tikhonov regularization is no longer required  and the bilevel problem is given by
\begin{subequations}\label{eq:opt_prob_ESC}
\begin{equation}
\min_{0 \leq \lambda \leq b} \; \ell(u)
\end{equation}
\begin{equation}
{\rm s.t. } \mu (u,\psi)_V + \lambda(u-f,\psi)_{0,\Omg}=0, 
\quad \forall \;\psi \in V_c.
\end{equation}
\end{subequations}
Let the Lagrangian and its derivative with respect to $u$ be given by
\begin{equation*}
\mathbb L (u, \lambda,p) := \ell(u) + \mu (u,p)_V + \lambda(u-f,p)_{0,\Omg}
\end{equation*}
and
\begin{equation*}
\mathbb L_u(v) = \big(\nabla \ell(u),v\big)_{0,\Omg} + \mu (p,v)_V + \lambda(p,v)_{0,\Omg}.
\end{equation*}
It follows that
\begin{equation*}
	\mu (p,v)_V + \lambda(p,v)_{0,\Omg}=-\big(\nabla \ell(u),v\big)_{0,\Omg}, \quad \forall\; v \in V_c.
\end{equation*}
On the other hand, the derivative of $\mathbb L$ with respect to $\lambda$ is given by
\begin{equation*}
	\mathbb L_\lambda(h-\lambda) = (u-f,p) (h-\lambda)= (h- \lambda) \int_\Omega (u-f) p ~dx \geq 0, \quad \forall \;h \in [0,\Lambda].
\end{equation*}
{Thus, using Hilbert projection theorem, the optimality system may be written as follows:}
\begin{subequations}\label{opt_system:scalar_lambda}
\begin{align}\label{opt_system:scalar_lambda_a}
\mu (u,\psi)_V + \lambda(u-f,\psi)_{0,\Omg} &=0, & 
\forall \psi &\in V_c,\\
\mu (p,v)_V + \lambda(p,v)_{0,\Omg} &=-\big(\nabla \ell(u),v\big)_{0,\Omg}, &
\forall v &\in V_c,	\label{opt_system:scalar_lambda_b}\\
P_{[0,\Lambda]} \left(\lambda -c \int_\Omega (u-f) p ~dx\right) &= \lambda, & 
\forall c&>0, \label{opt_system:scalar_lambda_c}
\end{align}
\end{subequations}
where $P_{[0,\Lambda]}$ is the standard projection operator onto the interval $[0,\Lambda]$. {This reformulation turns out to be of numerical importance, since it enables the use of efficient first- and second-order iterative optimization methods.}

\section{Optimization with respect to the weights}\label{sec:opt-wrt-w}
In this section we introduce and analyze the bilevel problem for the identification of the optimal weight in a nonlocal means kernel. We consider a modified nonlocal means kernel where we restrict the integral to a bounded region, i.e.,
\begin{equation}\label{eq:w-kernel}
\gamma_w(\xb,\yb) = \exp\Big\{-
\int_{B_\rho(\zerob)} w(\taub) \big( f(\xb+\taub)-f(\yb+\taub) \big)^2 d\taub \Big\},
\end{equation}
where the $\ell^\infty$ ball $B_\rho$ is the patch of the image explored within the integration and $w \in \Uad:=\big\{v \in Y: 0 \leq w({\bf t}) \leq W, \text{ a.e. in }  B_\rho(\zerob) \big\}$, with $Y \subseteq L^2(B_\rho(\zerob))$ a closed subspace. This type of $\ell^\infty$ (or square) patches have been also used in \cite{Salmon2010}. Note that, to simplify the notation in the analysis, we embedded the parameter $\delta$ in the weight $w$; in Section \ref{sec:numerical-tests}, for each numerical test, we provide more details on the choice of kernel parameters.

Although our analysis is focused on a specific kernel, it can be extended to any exponential-type kernel and, in general, to kernels whose energy space is equivalent to $L^2$.

\subsection{Lower-level problem}\label{sec:lower-level-w}

For a given $\lambda > 0$ and $w \in\Uad$ we consider the following denoising problem
\begin{equation}\label{eq:lower-level-w}
\min_{u\in V_c^w} J(u,\lambda)= \frac{\mu}{2} \|u\|_{V^w}^2 + \frac{\lambda}{2} \int_\Omg (u-f)^2 \,d\xb,
\end{equation}
where the weight-dependent energy space is defined as \(V_c^w = \big\{ v \in L^2(\Omega \cup \Omega_I): \|v\|_{V^w} < \infty \big\}\) with
\[
\|v\|^2_{V^w}  := \displaystyle \int_\Omgf \int_\Omgf	\big(v(\xb)-v(\yb)\big)^2
\gamma_w(\xb,\yb) \,d\yb d\xb.
\]
Note that the spaces $V_c^w$ are also equivalent to $L^2_c$ for any $w$. Moreover, the equivalence constants are independent of $w$, thanks to the bilateral box constraints. By proceeding in a similar manner as in Theorem \ref{thm:lower-level-wellp}, it can be readily verified that, for every \(w \in \Uad \), there exists a unique solution \(u \in V^w\) for the lower-level problem \eqref{eq:lower-level}. Moreover, the strict convexity and differentiability of the fidelity term yields the following necessary and sufficient optimality condition
\begin{equation}\label{eq:lower-optimality-w}
	\mu (u,\psi)_{V^w} + \big(\lambda(u-f),\psi\big)_{0,\Omg}=0, 		\quad \forall \psi \in V_c^w,
\end{equation}
where
\begin{equation}
	(u,\psi)_{V^w} = \displaystyle \int_\Omgf \int_\Omgf 	\big(v(\xb)-v(\yb)\big) \big(\psi(\xb)-\psi(\yb)\big) \gamma_w(\xb,\yb)\,d\yb d\xb,
\end{equation}
and the following a-priori bound
\begin{equation} \label{eq: apriori weight}
	\|u\|_{V^w} \leq C_\lambda \|f\|_{0,\Omg}.
\end{equation}

\subsection{Bilevel problem}\label{sec:bilevel-w}

We consider the following bilevel optimization problem for the estimation of the optimal kernel weight in \eqref{eq:w-kernel}.
\begin{equation}\label{eq:weight-opt}
	\min_{(w,u)\in\mathcal T_{ad}} \ell (u),
\end{equation}
where the feasible set is given by $\mathcal T_{ad}:= \big\{ (u,w): w\in \Uad \text{ and }\eqref{eq:lower-optimality-w} \text{ holds} \big\}$. 
\begin{theorem}\label{thm:bi-level-existence-w}
	Let $Y \subset L^2(B_\rho(\zerob))$ be a finite-dimensional subspace. The bilevel problem \eqref{eq:weight-opt} admits a solution \((\ua,\wa) \in \mathcal T_{ad}\).
\end{theorem}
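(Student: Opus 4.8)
The plan is to apply the direct method of the calculus of variations, with the finite-dimensionality of $Y$ as the decisive ingredient for handling the nonlinear dependence of the kernel on $w$. First I would fix a minimizing sequence $\{(u_n,w_n)\} \subset \mathcal{T}_{ad}$ with $\ell(u_n) \to \inf_{(u,w)\in\mathcal{T}_{ad}} \ell(u)$, which is well-defined since $\ell$ is bounded from below. Because $Y$ is finite-dimensional and $\Uad$ is closed and bounded (by the bilateral constraint $0 \leq w \leq W$), $\Uad$ is compact; hence a subsequence satisfies $w_n \to \wa$ strongly in $L^2(B_\rho(\zerob))$ with $\wa \in \Uad$. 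In parallel, the \emph{a-priori} bound \eqref{eq: apriori weight}, combined with the equivalence of $\|\cdot\|_{V^w}$ and the $L^2$-norm with constants independent of $w$, shows that $\{u_n\}$ is bounded in $L^2_c(\Omgf)$, so a further subsequence obeys $u_n \weak \ua$ weakly in $L^2_c$.

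The core of the argument is to check that $(\ua,\wa)$ is admissible, i.e.\ that $\ua$ solves the lower-level condition \eqref{eq:lower-optimality-w} for the kernel $\gamma_{\wa}$. The key preliminary step is the convergence of the kernels. Since $f \in L^\infty(\Omega)$, for each fixed $(\xb,\yb)$ the map $\taub \mapsto \big(f(\xb+\taub)-f(\yb+\taub)\big)^2$ lies in $L^2(B_\rho(\zerob))$, so the strong convergence of $w_n$ together with the Cauchy--Schwarz inequality yields convergence of the exponents and hence $\gamma_{w_n}(\xb,\yb) \to \gamma_{\wa}(\xb,\yb)$ pointwise. As $w_n \geq 0$ forces $0 < \gamma_{w_n} \leq 1$, dominated convergence upgrades this to $\gamma_{w_n} \to \gamma_{\wa}$ strongly in $L^2(\Omgf \times \Omgf)$.

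With the kernels under control I would pass to the limit in \eqref{eq:lower-optimality-w}, testing against a fixed $\psi \in V_c^{\wa} = L^2_c$. The fidelity term $\lambda(u_n-f,\psi)_{0,\Omg}$ converges by the weak convergence of $u_n$. For the energy term, setting $v_n(\xb,\yb):=u_n(\xb)-u_n(\yb)$, which converges weakly in $L^2(\Omgf\times\Omgf)$ to $v^*(\xb,\yb):=\ua(\xb)-\ua(\yb)$, and $g_n(\xb,\yb):=\big(\psi(\xb)-\psi(\yb)\big)\gamma_{w_n}(\xb,\yb)$, which converges strongly to $g^*:=\big(\psi(\xb)-\psi(\yb)\big)\gamma_{\wa}(\xb,\yb)$, the splitting $\iint v_n g_n = \iint v_n(g_n-g^*) + \iint v_n g^*$ does the job: the first integral is bounded by $\|v_n\|_{L^2}\,\|g_n-g^*\|_{L^2}\to 0$ and the second converges by weak--strong pairing. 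Thus $\ua$ satisfies \eqref{eq:lower-optimality-w} with $\gamma_{\wa}$, so $(\ua,\wa)\in\mathcal{T}_{ad}$. Finally, since $\ell$ is convex and continuous it is weakly lower semicontinuous, whence $\ell(\ua)\leq\liminf_n \ell(u_n)=\inf_{\mathcal{T}_{ad}}\ell$, and $(\ua,\wa)$ is the desired minimizer.

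The main obstacle is precisely this weak--strong passage to the limit in the nonlinear energy term, and the finite-dimensionality hypothesis is what makes it work: it promotes the weak compactness of $\Uad$ to strong $L^2$ compactness, which is exactly what is needed to pass the limit through the exponential defining $\gamma_w$. Were $Y$ infinite-dimensional, one would obtain only weak convergence of $w_n$, and the nonlinear kernel map $w \mapsto \gamma_w$ would not be sequentially weakly continuous in general, so the limiting kernel could fail to coincide with $\gamma_{\wa}$.
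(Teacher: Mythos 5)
Your proposal is correct and follows essentially the same route as the paper's proof: minimizing sequence, strong convergence of $w_n$ from finite-dimensionality, weak $L^2$ convergence of $u_n$ via the a-priori bound and norm equivalence, convergence of the kernels, weak--strong passage to the limit in the state equation, and weak lower semicontinuity of $\ell$. The only cosmetic difference is that the paper establishes the kernel convergence \emph{uniformly}, via the explicit estimate $|\gamma_{w_n}(\xb,\yb)-\gamma_{\wa}(\xb,\yb)| \leq 4\|f\|_\infty^2\,\rho\,\|w_n-\wa\|_{L^2}$, i.e.\ in $L^\infty(\Omgf\times\Omgf)$, whereas you obtain strong $L^2$ convergence by dominated convergence --- which still suffices for your splitting since the uniform bound $0<\gamma_{w_n}\leq 1$ lets you dominate $g_n-g^*$ by $2|\psi(\xb)-\psi(\yb)|$.
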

\begin{proof}
Since the functional is bounded from below, the box constraints and the a-priori estimate \eqref{eq: apriori weight} imply that there exists a minimizing sequence $\{(w_n, u_n)\} \in \mathcal T_{ad}$ that is uniformly bounded. Moreover, the box constraints and the equivalence of spaces imply that the sequence $\{u_n\}$ is also bounded in $L^2(\Omgf)$. Thus, there exists a subsequence, that we still denote by $\{(w_n, u_n)\}$, {and a limit point $(\wa,\ua) \in Y \times L^2(\Omgf)$ such that
$w_n \weak \wa$ weakly in $Y$ and $u_n \weak \ua$ weakly in $L^2(\Omgf)$. Since $Y$ is finite-dimensional, it follows that $w_n \to \wa$ strongly in $Y$}

We next show that $(\ua,\wa)\in \mathcal T_{ad}$. Since $\Uad$ is weakly closed, $\wa$ also satisfies the box constraints. Moreover, since $f \in L^\infty(\Omgf)$, it follows that
\begin{equation*}
	\int\limits_{B_\rho(\zerob)} -w_n (\taub) \big( f(\xb+\taub)-f(\yb+\taub) \big)^2 d\taub \to \int\limits_{B_\rho(\zerob)} -\wa(\taub) \big( f(\xb+\taub)-f(\yb+\taub) \big)^2 d\taub.
\end{equation*}
which implies that $\gamma_{w_n}(\xb,\yb) \to \gamma_{\wa}(\xb,\yb), \, \forall \xb,\yb$. {Moreover, it can be verified the limit holds uniformly and, therefore, $\gamma_{w_n} \to \gamma_{\wa}$ in $L^\infty(\Omgf \times \Omgf)$. Indeed,
\begin{align*}
|\gamma_{w_n}(\xb,\yb) - \gamma_{\wa}(\xb,\yb)| &= \left| \int\limits_{B_\rho(\zerob)} -(w_n (\taub)-\wa(\taub)) \big( f(\xb+\taub)-f(\yb+\taub) \big)^2 d\taub \right|\\
& \leq \int\limits_{B_\rho(\zerob)} |w_n (\taub)-\wa(\taub)| \big( f(\xb+\taub)-f(\yb+\taub) \big)^2 d\taub\\
& \leq  4 \|f\|_\infty^2 \rho \|w_n - \wa\|_{L^2},
\end{align*}
i.e., the bound is independent of $(\xb,\yb)$.
}

Let us recall that each pair $(u_n,w_n)$ solves
\begin{equation}
	\mu\int\limits_\Omgf\int\limits_\Omgf (u_n-u_n')(v-v') \gamma_{w_n}(\xb,\yb) d\yb\,d\xb
+\lambda\int\limits_\Omg u_n\,v\,d\xb = \lambda\int\limits_\Omg f\,v\,d\xb, ~\forall v \in V_c^{w_n}
\end{equation}
where, to simplify the notation, we used $v:=v(\xb)$ and $v':=v(\yb)$. {Thanks to the weak convergence of $u_n \weak \ua$ in $L^2(\Omgf)$ and the strong convergence of $\gamma_{w_n}(\xb,\yb) \to \gamma_{\wa}(\xb,\yb)$ in $L^\infty(\Omgf \times \Omgf)$, we may pass to the limit, as $n\to\infty$, in the previous equation and get that $(\ua,\wa)\in \mathcal T_{ad}$, i.e.,}
%
\begin{equation}
	\mu\int\limits_\Omgf\int\limits_\Omgf (\ua-\ua')(v-v')\gamma_{\wa}(\xb,\yb) d\yb\,d\xb
	+\lambda\int\limits_\Omg (\ua-f)v\,d\xb = 0, ~\forall v \in V_c^{w^*}.
\end{equation}

Finally, since the loss function is convex and continuous, it is weakly lower semicontinuous, and, thus, $(\ua,\wa)$ is a solution of \eqref{eq:weight-opt}.
\end{proof}

\subsubsection{Differentiability of the solution operator}

We first prove a lemma that will be useful in the proof of differentiability.
\begin{lemma}\label{thm:ut-norm-bound}
Let $w \in \Uad$ and $h \in Y$ be a feasible direction, i.e., there exists some  $t\in\mbR^+$ such that $w+th \in \Uad$. Then, the weak solution of problem
\begin{equation}\label{eq:ut-problem}
	\mcL_t u_t + \lambda(u_t-f)=0
\end{equation}
with
\begin{equation}
	\mcL_t v(\xb) = 2\mu\int\limits_{\Omgf\cap B_\veps(\xb)} (v'-v) \gamma_{w+th}(\xb,\yb)d\yb,
\end{equation}
satisfies the estimate
\begin{equation}\label{eq:ut-norm}
	\mu\|u_t\|_{V^w}^2+ \lambda \|u_t\|_{L^2(\Omgf)}^2\leq \lambda\|f\|_{L^2(\Omega)} \|u_t\|_{L^2(\Omgf)}.
\end{equation}
\end{lemma}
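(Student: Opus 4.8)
The plan is to characterize $u_t$ through its weak (variational) form and then obtain the estimate by a standard energy argument, testing the equation against the solution itself. Since $w+th\in\Uad$, the lower-level problem associated with the kernel $\gamma_{w+th}$ is well posed by the same reasoning as in Theorem~\ref{thm:lower-level-wellp}, so $u_t\in V_c^{w+th}$ exists, is unique, and is equivalently characterized by the first-order condition \eqref{eq:lower-optimality-w} with $w$ replaced by $w+th$, namely
\begin{equation*}
\mu\,(u_t,\psi)_{V^{w+th}}+\big(\lambda(u_t-f),\psi\big)_{0,\Omg}=0,\qquad\forall\,\psi\in V_c^{w+th}.
\end{equation*}
This is precisely what multiplying \eqref{eq:ut-problem} by a test function and invoking the nonlocal Green's identity produces, the double integral collapsing into the weighted inner product $(\cdot,\cdot)_{V^{w+th}}$.

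Next I would insert the admissible choice $\psi=u_t$, which yields
\begin{equation*}
\mu\,\|u_t\|_{V^{w+th}}^2+\lambda\,(u_t-f,u_t)_{0,\Omg}=0,
\end{equation*}
and hence $\mu\|u_t\|_{V^{w+th}}^2+\lambda\|u_t\|_{L^2(\Omg)}^2=\lambda\,(f,u_t)_{0,\Omg}$. Because $u_t\in V_c^{w+th}$ vanishes on the interaction domain $\Omgi$, the domain and extended-domain $L^2$ norms coincide, i.e. $\|u_t\|_{L^2(\Omg)}=\|u_t\|_{L^2(\Omgf)}$; applying Cauchy--Schwarz to the fidelity cross term, $\lambda(f,u_t)_{0,\Omg}\le\lambda\|f\|_{L^2(\Omg)}\|u_t\|_{L^2(\Omgf)}$, then delivers the claimed inequality.

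The only point requiring care is the weight carried by the energy seminorm: the identity above produces $\|u_t\|_{V^{w+th}}$, whereas the statement records $\|u_t\|_{V^w}$. These are interchangeable here because, as noted after \eqref{eq:lower-level-w}, all the seminorms $\|\cdot\|_{V^w}$ are equivalent to $\|\cdot\|_{L^2_c(\Omgf)}$ with constants independent of $w$ (a consequence of the bilateral box constraints $0\le w\le W$), so $\|\cdot\|_{V^w}$ and $\|\cdot\|_{V^{w+th}}$ differ only through these $w$-uniform equivalence constants. I do not expect a genuine obstacle: the estimate is a textbook coercivity/energy bound, and the whole difficulty is bookkeeping---checking that the test function is admissible in $V_c^{w+th}$, identifying the two $L^2$ norms via the homogeneous volume constraint, and tracking the $w$-uniform norm equivalence, which is exactly the feature that makes the bound usable \emph{uniformly in $t$} in the subsequent differentiability analysis.
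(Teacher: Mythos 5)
Your proposal is correct and follows essentially the same route as the paper: write the weak form of \eqref{eq:ut-problem} with the kernel $\gamma_{w+th}$, test with $\psi=u_t$, and apply Cauchy--Schwarz to the fidelity term, using the homogeneous volume constraint to identify $\|u_t\|_{L^2(\Omega)}$ with $\|u_t\|_{L^2(\Omgf)}$. You are in fact more careful than the paper, whose own proof ends with the $t$-dependent energy norm $\|u_t\|_{V_t}$ and silently identifies it with $\|u_t\|_{V^w}$; your appeal to the $w$-uniform equivalence of the energy norms (via the bilateral box constraints) is the right way to bridge that gap, with the minor caveat that it yields \eqref{eq:ut-norm} only up to a multiplicative constant in the $\mu\|u_t\|_{V^w}^2$ term, which is harmless for the subsequent uniform-in-$t$ bounds.
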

\begin{proof}
The weak formulation of \eqref{eq:ut-problem} reads
\begin{equation}\label{eq:weak-t-form}
\begin{aligned}
\mu\int\limits_\Omgf\int\limits_\Omgf (u_t-u_t')(v-v') &\gamma_{w+th}(\xb,\yb) d\yb\,d\xb
+\lambda\int\limits_\Omg (u_t-f)v\,d\xb = 0, \quad \forall \, v\in V_t,
	\end{aligned}
\end{equation}
where $V_t$ is the energy space induced by using the weight $(w+th)$. For $v=u_t$, the result follows from
\begin{align*}
\mu\int\limits_\Omgf\int\limits_\Omgf  &(u_t-u_t')^2 \gamma_{w+th}(\xb,\yb) d\yb\,d\xb+\lambda \|u_t\|^2_{0,\Omgf} \\[2mm]
&=  \mu\|u_t\|_{V_t}^2+\lambda \|u_t\|^2_{0,\Omgf}
\leq \lambda \|f\|_{L^2(\Omega)} \|u_t\|_{0,\Omgf}.
\end{align*}
\end{proof}
\begin{remark}
The last result implies that
\begin{equation*}
\mu \|u_t\|_{V^w}^2 \leq \lambda \|f\|_{L^2(\Omega)} \|u_t\|_{0,\Omgf} \quad \text{and} \quad \|u_t\|_{L^2(\Omgf)} \leq \|f\|_{L^2(\Omega)},
\end{equation*}
and, consequently, $\|u_t\|_{V^w} \leq C(\lambda) \|f\|_{L^2(\Omega)}.$
\end{remark}

Next, we prove that the sequence $\big\{\frac{u_t-u}{t}\big\}$ has a bounded $L^2$ norm and, thus, contains a weakly convergent subsequence.
\smallskip
\begin{lemma}\label{thm:zt-norm-bound}
Let $w \in \Uad$ and $h \in Y$ be a feasible direction. The sequence $\{z_t\}=\big\{ (u_t-u)/t \big\}$, where $u$, $u_t$ are the solutions to \eqref{eq:lower-optimality-w} and \eqref{eq:ut-problem} and {$t$ is sufficiently small}, is bounded in $L^2(\Omgf)$.
\end{lemma}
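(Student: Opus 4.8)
The plan is to compare the two weak equations governing $u$ and $u_t$ and to test both with the difference quotient $z_t$ itself. The subtle point is that \eqref{eq:lower-optimality-w} for $u$ uses the kernel $\gamma_w$, whereas the weak form \eqref{eq:weak-t-form} for $u_t$ uses the perturbed kernel $\gamma_{w+th}$, so the two bilinear forms are genuinely different. However, since all the energy spaces $V_c^w$, $V_t$ coincide with $L^2_c(\Omgf)$ as sets, with equivalence constants independent of the weight, $z_t=(u_t-u)/t$ is an admissible test function in both equations and the subtraction is legitimate. For $t$ small enough that $w+th\in\Uad$, both problems are well-posed by coercivity.

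First I would subtract the two weak equations. Because the kernels differ I cannot cancel the energy terms directly; instead I would add and subtract $(u-u')\gamma_{w+th}$ under the double integral, using the decomposition
\begin{equation*}
(u_t-u_t')\gamma_{w+th}-(u-u')\gamma_w
=\big[(u_t-u)-(u_t'-u')\big]\gamma_{w+th}+(u-u')\big(\gamma_{w+th}-\gamma_w\big).
\end{equation*}
Dividing by $t$, the first piece yields the weighted energy form of $z_t$, while the second carries the factor $(\gamma_{w+th}-\gamma_w)/t$. Choosing the test function $v=z_t$ and recalling that $z_t$ vanishes on $\Omgi$, the diagonal terms produce the identity
\begin{equation*}
\mu\|z_t\|_{V_t}^2+\lambda\|z_t\|_{0,\Omg}^2
=-\frac{\mu}{t}\int_\Omgf\int_\Omgf (u-u')\big(\gamma_{w+th}-\gamma_w\big)(z_t-z_t')\,d\yb\,d\xb.
\end{equation*}

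The crucial step — and the one place where care is genuinely needed — is absorbing the $1/t$ on the right-hand side. Here I would reuse the Lipschitz bound for the kernel already established in the proof of Theorem \ref{thm:bi-level-existence-w}: since $|e^{-a}-e^{-b}|\le|a-b|$ for $a,b\ge 0$, one obtains $|\gamma_{w+th}-\gamma_w|\le 4\|f\|_\infty^2\rho\,\|th\|_{L^2}$, and hence $|\gamma_{w+th}-\gamma_w|/t\le C_f\|h\|_{L^2}$ with a constant $C_f$ independent of $t$. Thus the troublesome $1/t$ is exactly compensated by the $O(t)$ size of the kernel increment, which is the heart of the argument.

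Finally I would bound the remaining double integral by Cauchy--Schwarz together with the elementary estimate $\int_\Omgf\int_\Omgf(v-v')^2\,d\yb\,d\xb\le 2|\Omgf|\,\|v\|_{0,\Omgf}^2$, applied to both $u$ and $z_t$. Dropping the nonnegative energy term on the left and dividing by $\|z_t\|_{0,\Omg}$ gives
\begin{equation*}
\|z_t\|_{0,\Omg}\le\frac{2\mu C_f|\Omgf|}{\lambda}\,\|h\|_{L^2}\,\|u\|_{0,\Omgf}.
\end{equation*}
Since the a-priori estimate \eqref{eq: apriori weight}, together with the weight-uniform equivalence of norms, bounds $\|u\|_{0,\Omgf}$ by a constant multiple of $\|f\|_{0,\Omg}$, the right-hand side is independent of $t$ for all sufficiently small $t$, and because $z_t$ vanishes on $\Omgi$ this bounds $\{z_t\}$ in $L^2(\Omgf)$. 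The main obstacle throughout is the mismatch of the two kernels; everything hinges on the add-and-subtract decomposition and on the fact that the kernel increment is $O(t)$, which is precisely what allows the $1/t$ factor to survive to a finite limit.
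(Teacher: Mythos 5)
Your proof is correct, and it departs from the paper's argument in two small but genuine ways. The paper subtracts the same two weak forms but uses the opposite decomposition: it keeps the difference quotient in the \emph{unperturbed} energy form $(\cdot,\cdot)_{V^w}$ and attaches the kernel increment $\gamma_{w+th}-\gamma_w$ to $u_t$; it then controls the increment by invoking the differentiability of the exponential as a superposition operator, which produces the leading term $\|h\|_{L^2}\|f\|_\infty^2$ plus an $o(t)/t$ remainder, and it needs the uniform bound on $\|u_t\|_{0,\Omgf}$ from Lemma \ref{thm:ut-norm-bound} (via \eqref{eq:ut-norm}) to close the estimate. You instead keep the quotient in the \emph{perturbed} energy $(\cdot,\cdot)_{V_t}$ and attach the increment to the fixed solution $u$, then bound $|\gamma_{w+th}-\gamma_w|/t \le C_f\|h\|_{L^2}$ by the elementary Lipschitz estimate $|e^{-a}-e^{-b}|\le |a-b|$ for $a,b\ge 0$, which is legitimate since $w$ and $w+th$ lie in $\Uad$ and hence the exponents are of the required sign; this is the same computation the paper itself performs for $\gamma_{w_n}\to\gamma_{w^*}$ in the proof of Theorem \ref{thm:bi-level-existence-w}. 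Your variant buys two things: the bound is uniform in $t$ with no $o(t)/t$ remainder, and Lemma \ref{thm:ut-norm-bound} is not needed at this step, since only the a-priori estimate \eqref{eq: apriori weight} for $u$ enters; the paper's route, in turn, rehearses the superposition-operator differentiability that it reuses anyway in Proposition \ref{thm:z-equation}, so neither choice is wasteful in context. Two details you handled correctly and that deserve confirmation: testing both equations with $z_t$ is justified precisely by the weight-uniform identification of the energy spaces with $L^2_c(\Omgf)$, and your side estimate $\int_\Omgf\int_\Omgf (v-v')^2\,d\yb\,d\xb \le 2|\Omgf|\,\|v\|^2_{0,\Omgf}$ is in fact exact rather than merely generous, since expanding the square gives $2|\Omgf|\,\|v\|^2_{0,\Omgf} - 2\bigl(\int_\Omgf v\bigr)^2$.
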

\begin{proof}
By subtracting the weak forms \eqref{eq:lower-optimality-w} and \eqref{eq:weak-t-form}, and using the equivalence of spaces, we obtain
\begin{equation}\label{eq:weak-t-forms-difference}
\begin{aligned}
	&\mu\int\limits_\Omgf\int\limits_\Omgf (u_t-u_t')(v-v')\gamma_{w+th}(\xb,\yb) d\yb\,d\xb \\[2mm]
	&\quad-\mu\int\limits_\Omgf\int\limits_\Omgf (u-u')(v-v')\gamma_{w}(\xb,\yb)d\yb\,d\xb
	+\;\lambda\int\limits_\Omg (u_t-u)v\,d\xb = 0, \;\; \forall \, v\in V^w.
\end{aligned}
\end{equation}
Thus,
\begin{align}
\nonumber
	&\mu\int\limits_\Omgf\int\limits_\Omgf (u_t-u-u_t'+u')(v-v')\gamma_{w}(\xb,\yb)d\yb\,d\xb\, +
 	\\[2mm]
 \nonumber
	&\mu\int\limits_\Omgf\int\limits_\Omgf (u_t-u_t')(v-v')
	\Big[\gamma_{w+th}(\xb,\yb) -\gamma_{w}(\xb,\yb)\Big] d\yb\,d\xb
	\\[2mm]
	&\qquad\qquad+ {\lambda}\int\limits_\Omg (u_t-u)v\,d\xb=0, \;\; \forall \, v\in V^w.		\label{eq:weak-t-forms-difference-2}
\end{align}
Choosing $v=u_t-u$ and dividing all expressions by $t$, we get
\begin{multline*}
	\frac{\mu}{t}\|u_t-u\|^2_{V^{w}}+ \frac{\lambda}{t}\|u_t-u\|^2_{0,\Omgf}\\
	+\mu\int\limits_\Omgf\int\limits_\Omgf (u_t-u_t')(u_t-u-u_t'+u') \frac{1}{t}\Big[\gamma_{w+th}(\xb,\yb)-\gamma_{w}(\xb,\yb)\Big]d\yb\,d\xb=0.
\end{multline*}

{By using the differentiability of the exponential function as a superposition operator (see, e.g., \cite[Section 4.3.2]{troltzsch2010optimal}), the bilateral box constraints in $\Uad$ and the equivalence of norms, we obtain}
\begin{multline*}
	\frac{C}{t} \|u_t-u \|^2_{0,\Omgf}  + \frac{\lambda}{t} \|u_t-u \|^2_{0,\Omgf}\\ \leq
	\tilde C \left( \|h\|_{L^2(B_\rho(\zerob))} \|f\|^2_{\infty} +\frac{o(t)}{t} \right) \|u_t\|_{0,\Omgf}\|u_t-u\|_{0,\Omgf},
\end{multline*}
which, combined with \eqref{eq:ut-norm}, implies that
\begin{equation*}
\left\| \frac{u_t-u}{t} \right\|_{0,\Omgf} \leq
C \|h\|_{L^2(B_\rho(\zerob))} \|f\|^3_{\infty} + \frac{o(t)}{t} \|f\|_{\infty}.
\end{equation*}
\end{proof}

The lemma above guarantees existence of a weakly convergent subsequence (denoted the same) and of a limit point $\za$ such that $z_t \weak \za$ in $L^2(\Omgf)$. {As a byproduct we also get that $u_t \to u$ in $L^2(\Omgf)$ as $t \to 0$.}

In the following proposition we derive the linearized equation for $\za$.
\begin{proposition}\label{thm:z-equation}
Let $\za$ be such that $z_t\weak\za$ in $L^2(\Omgf)$. Then, $\za$ corresponds to the unique solution of the linearized equation
\begin{equation}\label{eq:z-equation}
	\mu(\za,v)_V  + \mu(u,v)_{\widetilde V}+ \lambda(\za,v)_{0,\Omg}=0, \quad \forall v \in V^w,
\end{equation}
with $(u,v)_{\widetilde V}= \int\limits_\Omgf\int\limits_\Omgf (u-u')(v-v') \widetilde \gamma_h(\xb,\yb) (\xb,\yb) d\yb\,d\xb$, where $\widetilde \gamma_h(\xb,\yb)$ is the linearized kernel given by
\begin{equation}\label{eq:lin-kernel}
    \widetilde \gamma_h(\xb,\yb)=\gamma_{w}(\xb,\yb) ~\int_{B_\rho(\zerob)} -h(\taub) \big( f(\xb+ \taub)-f(\yb+\taub) \big)^2 ~d \taub.
\end{equation}
\end{proposition}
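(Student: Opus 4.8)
The plan is to start from the difference equation \eqref{eq:weak-t-forms-difference-2}, divide it by $t$, and pass to the limit as $t\to 0$ term by term, identifying the limit with the linearized equation \eqref{eq:z-equation}. Writing $z_t=(u_t-u)/t$, equation \eqref{eq:weak-t-forms-difference-2} becomes, after division by $t$,
\[
\mu\int_\Omgf\int_\Omgf (z_t-z_t')(v-v')\gamma_{w}\,d\yb\,d\xb + \mu\int_\Omgf\int_\Omgf (u_t-u_t')(v-v')\,\frac{\gamma_{w+th}-\gamma_{w}}{t}\,d\yb\,d\xb + \lambda\int_\Omg z_t\,v\,d\xb = 0,
\]
for every $v\in V^w$, where I use the equivalence of all the spaces $V^{w+th}$, $V^w$ and $L^2_c$ to justify testing with a fixed $v\in V^w$. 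I would treat the three summands separately.

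For the first and third summands I would invoke the weak convergence $z_t\weak\za$ in $L^2(\Omgf)$ granted by Lemma \ref{thm:zt-norm-bound}. Since $\gamma_{w}$ is bounded by $1$ and $v\in V^w\sim L^2_c$, both maps $\zeta\mapsto(\zeta,v)_V$ and $\zeta\mapsto(\zeta,v)_{0,\Omg}$ are continuous linear functionals on $L^2(\Omgf)$; hence the first summand converges to $\mu(\za,v)_V$ and the third to $\lambda(\za,v)_{0,\Omg}$.

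The decisive step is the middle summand. Here I would first establish the differentiability of the kernel with respect to the weight, uniformly in $(\xb,\yb)$. Setting $G_w(\xb,\yb)=\int_{B_\rho(\zerob)} w(\taub)(f(\xb+\taub)-f(\yb+\taub))^2\,d\taub$, so that $\gamma_{w+th}=\gamma_{w}\,e^{-tG_h}$, a direct expansion gives
\[
\frac{\gamma_{w+th}-\gamma_{w}}{t}=\gamma_{w}\,\frac{e^{-tG_h}-1}{t}\longrightarrow -\gamma_{w}\,G_h=\widetilde\gamma_h ,
\]
and, since $|G_h|\le 4\|f\|_\infty^2\rho\|h\|_{L^2}$ uniformly in $(\xb,\yb)$ (the very estimate used in Theorem \ref{thm:bi-level-existence-w}), this convergence holds in $L^\infty(\Omgf\times\Omgf)$. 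I would then split the middle summand as
\[
\mu\int_\Omgf\int_\Omgf (u_t-u_t')(v-v')\Big[\tfrac{\gamma_{w+th}-\gamma_{w}}{t}-\widetilde\gamma_h\Big]d\yb\,d\xb + \mu\int_\Omgf\int_\Omgf (u_t-u_t')(v-v')\,\widetilde\gamma_h\,d\yb\,d\xb .
\]
The first piece is bounded by $C\,\|\tfrac{\gamma_{w+th}-\gamma_{w}}{t}-\widetilde\gamma_h\|_{L^\infty}\,\|u_t-u_t'\|_{L^2(\Omgf\times\Omgf)}\,\|v-v'\|_{L^2(\Omgf\times\Omgf)}$, which vanishes by the uniform kernel convergence together with the uniform bound on $u_t$ from Lemma \ref{thm:ut-norm-bound}. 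For the second piece I would use the strong convergence $u_t\to u$ in $L^2(\Omgf)$ recorded after Lemma \ref{thm:zt-norm-bound}: this yields $u_t-u_t'\to u-u'$ in $L^2(\Omgf\times\Omgf)$, and since $\widetilde\gamma_h(v-v')\in L^2(\Omgf\times\Omgf)$, the piece converges to $\mu(u,v)_{\widetilde V}$.

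Collecting the three limits gives exactly \eqref{eq:z-equation}. Uniqueness then follows from Lax--Milgram: the bilinear form $(\zeta,v)\mapsto\mu(\zeta,v)_V+\lambda(\zeta,v)_{0,\Omg}$ is bounded and coercive on $V^w\sim L^2_c$ (just as in Theorem \ref{thm:lower-level-wellp}), while $v\mapsto-\mu(u,v)_{\widetilde V}$ is a bounded linear functional; hence $\za$ is uniquely determined, and in particular the whole family $\{z_t\}$ converges, not merely a subsequence. I expect the middle summand to be the main obstacle, since it requires simultaneously the uniform ($L^\infty$) differentiability of the exponential kernel in the direction $h$ and the strong $L^2$ convergence of $u_t$ to $u$; it is the interplay of these two facts that upgrades the mere weak convergence of $z_t$ into an identification of the limit equation.
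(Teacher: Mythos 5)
Your proof is correct, but it organizes the limit passage differently from the paper. The paper starts from \eqref{eq:weak-t-forms-difference}, adds and subtracts $\mu(u,v)_{V_t}$, so that the difference quotient $z_t$ is paired with the \emph{moving} kernel $\gamma_{w+th}$ while the \emph{fixed} state $u$ is paired with the kernel quotient $\frac{1}{t}(\gamma_{w+th}-\gamma_w)$; the first term then passes to the limit by a weak--strong pairing ($z_t\weak\za$ in $L^2(\Omgf)$ against $(v-v')\gamma_{w+th}\to(v-v')\gamma_w$), the second by the differentiability of the exponential superposition operator alone, and no strong convergence of $u_t$ is needed. You instead keep the grouping of \eqref{eq:weak-t-forms-difference-2}: $z_t$ against the fixed kernel $\gamma_w$ --- so that term converges by plain weak convergence against a fixed $L^2$ object --- and $u_t$ against the kernel quotient, which obliges you to combine two facts: the uniform $L^\infty$ differentiability of $t\mapsto\gamma_{w+th}$, which you make explicit through the factorization $\gamma_{w+th}=\gamma_w\,e^{-tG_h}$ and the bound $\bigl\|\frac{1}{t}(\gamma_{w+th}-\gamma_w)-\widetilde\gamma_h\bigr\|_{L^\infty}\le Ct$ (the paper leaves this to the citation of \cite{troltzsch2010optimal}), and the strong convergence $u_t\to u$ in $L^2(\Omgf)$, which the paper records as a byproduct of Lemma \ref{thm:zt-norm-bound} and which you legitimately invoke. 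Both routes are sound; the paper's decomposition buys a slightly leaner argument (no use of $u_t\to u$), while yours makes every estimate quantitative --- including the boundedness of $\|u_t-u_t'\|_{L^2(\Omgf\times\Omgf)}$ via Lemma \ref{thm:ut-norm-bound} --- and your closing observation that uniqueness of the solution of \eqref{eq:z-equation} (Lax--Milgram on $V^w\sim L^2_c$) upgrades subsequential to full convergence of $\{z_t\}$ is a worthwhile point that the paper's proof, which settles uniqueness only by appeal to ellipticity, leaves tacit.
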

\begin{proof}
By \eqref{eq:weak-t-forms-difference} we have
\begin{align*}
	&\mu \int\limits_\Omgf\int\limits_\Omgf (u_t-u_t')(v-v')\gamma_{w+th}(\xb,\yb) d\yb\,d\xb \\[2mm]
	& \quad- \mu\int\limits_\Omgf\int\limits_\Omgf (u-u')(v-v')\gamma_{w}(\xb,\yb) d\yb\,d\xb
	+   \lambda\int\limits_\Omg (u_t-u)v\,d\xb=0.
\end{align*}
%
Adding and subtracting $(u,v)_{V_t}$ and dividing both sides by $t$, we get
\begin{align*}
	& \mu\int\limits_\Omgf\int\limits_\Omgf \frac{1}{t} \big( (u_t-u) - (u_t'-u') \big)(v-v')\gamma_{w+th}(\xb,\yb)d\yb\,d\xb\\[2mm]
	+\, &\mu\int\limits_\Omgf\int\limits_\Omgf (u-u')(v-v')
    \frac{1}{t} \big[\gamma_{w+th}(\xb,\yb) - \gamma_{w}(\xb,\yb) \big]d\yb\,d\xb\\
    &\qquad +\frac{\lambda}{t}(u_t-u,v)_{0,\Omg} =0,
\end{align*}
The weak convergence $\frac{u_t-u}{t} \weak z^*$ in $L^2(\Omgf)$, the strong convergence $w+th \to w$ and the continuity and differentiability of the exponential superposition operator imply that the limit as $t\to 0$ of the previous equation is given by
\begin{align*}
\mu(\za,v)_V + \lambda(\za,v)_{0,\Omg}
\quad + \mu\int\limits_\Omgf\int\limits_\Omgf (u-u')(v-v')\widetilde \gamma_h(\xb,\yb) d\yb\,d\xb=0.
\end{align*}
Uniqueness follows as for the state equation thanks to the ellipticity of the energy terms.
\end{proof}

The following theorem finalizes the differentiability result.
\begin{theorem}
Let $\func{S_w}{\Uad}{V^w}$ be the solution operator which maps $w$ into the corresponding solution \(u \in V^w\) to equation \eqref{eq:lower-optimality-w}. Then the operator $S_w$ is directionally differentiable in any feasible direction {and the directional derivative $z^*=S_w' h$ at $w$ in direction $h \in Y$ is given by the unique solution of the linearized equation \eqref{eq:z-equation}.}
\end{theorem}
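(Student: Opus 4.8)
The plan is to assemble the preceding lemmas into a directional differentiability statement and then upgrade the mode of convergence from weak to strong. First I would fix $w\in\Uad$ and a feasible direction $h\in Y$, and recall that by Lemma~\ref{thm:zt-norm-bound} the difference quotients $z_t=(u_t-u)/t$ are bounded in $L^2(\Omgf)$, hence --- by the $w$-uniform equivalence $\|\cdot\|_{V^w}\sim\|\cdot\|_{L^2}$ --- bounded in $V^w$. This yields a subsequence with $z_t\weak\za$, and Proposition~\ref{thm:z-equation} identifies $\za$ as the \emph{unique} solution of the linearized equation~\eqref{eq:z-equation}. Since that limit does not depend on the chosen subsequence, a standard subsequence argument promotes the convergence to the whole family $z_t\weak\za$ as $t\to0^+$.

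The core of the proof is to strengthen this to $z_t\to\za$ in $V^w$, which is what makes $\za=S_w'h$ a genuine directional derivative. I would subtract the linearized equation~\eqref{eq:z-equation} from the difference-quotient form of~\eqref{eq:weak-t-forms-difference-2} (divided by $t$). Because both carry the same principal part $\mu(\cdot,\cdot)_{V^w}+\lambda(\cdot,\cdot)_{0,\Omg}$, the difference on the left collapses to $\mu(z_t-\za,v)_{V^w}+\lambda(z_t-\za,v)_{0,\Omg}$, while the right-hand side is the kernel-dependent discrepancy
\[
\mu\int_\Omgf\int_\Omgf\Big[(u_t-u_t')\tfrac1t\big(\gamma_{w+th}-\gamma_w\big)-(u-u')\widetilde\gamma_h\Big](v-v')\,d\yb\,d\xb.
\]
Testing with $v=z_t-\za$ and using coercivity bounds the left-hand side below by $\mu\|z_t-\za\|_{V^w}^2$.

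To estimate the right-hand side I would write $u_t-u_t'=(u-u')+t(z_t-z_t')$ and split the bracket into $(u-u')\big[\widetilde\gamma_h-\tfrac1t(\gamma_{w+th}-\gamma_w)\big]$ and $(z_t-z_t')\big(\gamma_{w+th}-\gamma_w\big)$. The first factor tends to zero in $L^\infty(\Omgf\times\Omgf)$ by the differentiability of the exponential superposition operator (the very property used to produce $\widetilde\gamma_h$ in~\eqref{eq:lin-kernel}), and the second tends to zero in $L^\infty$ by its continuity; both bounds are uniform in $(\xb,\yb)$ thanks to the bilateral box constraints in $\Uad$ and $f\in L^\infty$, exactly as in the estimate $\|\gamma_{w_n}-\gamma_{\wa}\|_\infty\leq 4\|f\|_\infty^2\rho\|w_n-\wa\|_{L^2}$ used in Theorem~\ref{thm:bi-level-existence-w}. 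Combining these with the boundedness of $\{z_t\}$ in $L^2$, the a-priori bound~\eqref{eq: apriori weight}, and $\|v-v'\|_{L^2}\leq C\|z_t-\za\|_{V^w}$, I would arrive at
\[
\|z_t-\za\|_{V^w}\leq C\big(\,\|\widetilde\gamma_h-\tfrac1t(\gamma_{w+th}-\gamma_w)\|_\infty+\|\gamma_{w+th}-\gamma_w\|_\infty\,\big)\longrightarrow 0,
\]
which establishes strong convergence and hence that $S_w$ is directionally differentiable with $S_w'h=\za$. The main obstacle is precisely this kernel term: one must pass the difference quotient of the nonlinear Nemytskii operator $w\mapsto\gamma_w$ to its linearization \emph{uniformly} on $\Omgf\times\Omgf$, for which the $L^\infty$ box bounds on $w$ and the boundedness of $f$ are essential to keep the exponential and its increments controlled independently of $(\xb,\yb)$.
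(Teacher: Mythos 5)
Your proposal is correct and follows essentially the same route as the paper: assemble Lemmas \ref{thm:ut-norm-bound} and \ref{thm:zt-norm-bound} with Proposition \ref{thm:z-equation} to get the weakly convergent difference quotients and identify the limit, then test the equation satisfied by $\zeta := z_t-\za$ with $\zeta$ itself and conclude strong convergence from coercivity, the $w$-uniform norm equivalence, and the $L^\infty$-uniform differentiability of the exponential superposition operator. Your splitting $(u-u')\bigl[\widetilde\gamma_h-\tfrac1t(\gamma_{w+th}-\gamma_w)\bigr]+(z_t-z_t')\bigl(\gamma_{w+th}-\gamma_w\bigr)$ is only a trivial rearrangement of the paper's grouping, which keeps $(u_t-u_t')$ against the quotient-minus-linearization term and pairs $(u_t-u)$ with $\widetilde\gamma_h$ via the term $\mu(u_t-u,v)_{\tilde V}$, and both lead to the same final estimate.
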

\begin{proof}
Thanks to lemmas \ref{thm:ut-norm-bound}, \ref{thm:zt-norm-bound} and Proposition \ref{thm:z-equation}, it only remains to prove that
\begin{equation*}
\left\| \frac{u_t-u}{t}-z^*\right\|_{L^2(\Omgf)} \to 0 \quad \text{ as }t \to 0.
\end{equation*}
From equations  \eqref{eq:weak-t-forms-difference-2}  and \eqref{eq:z-equation}  we obtain that the difference $\zeta:= \frac{u_t-u}{t}-z^*$ is solution of the equation
\begin{multline*}
	\mu(\zeta,v)_{V^w} + \frac{\mu}{t} (u_t,v)_{V_t} -\frac{\mu}{t} (u_t,v)_{V^w} - \mu(u_t,v)_{\tilde V}
	+ \mu(u_t,v)_{\tilde V} - \mu(u,v)_{\tilde V} + \lambda \int_\Omega \zeta v =0,
\end{multline*}
or, equivalently,
\begin{multline*}
	\mu(\zeta,v)_{V^w} + \mu\int\limits_\Omgf\int\limits_\Omgf (u_t-u_t')(v-v')
	\left[ \frac{1}{t} \gamma_{w+th}(\xb,\yb)		\right.
      \left. -\frac{1}{t} \gamma_{w}(\xb,\yb)  -\widetilde \gamma_h(\xb,\yb) \right] d\yb\,d\xb
      \\
      +\mu(u_t-u,v)_{\tilde V} + \lambda \int_\Omega \zeta v =0.
\end{multline*}
By choosing $v= \zeta$ we have
\begin{multline*}
  \mu\|\zeta\|^2_{V^w} + \lambda \int_\Omega \zeta^2 = -\mu(u_t-u,\zeta)_{\tilde V} \\
  -\mu\int\limits_\Omgf\int\limits_\Omgf (u_t-u_t')(\zeta-\zeta')
      \left[ \frac{1}{t} \gamma_{w+th}(\xb,\yb)	\right.
      \left. -\frac{1}{t} \gamma_{w}(\xb,\yb) -\widetilde \gamma_h(\xb,\yb) \right] d\yb\,d\xb.
\end{multline*}
{The equivalence of norms}, the convergence $u_t \to u$ in $L^2(\Omgf)$ and the differentiability of the exponential Nemitski superposition operator, allow us to take the limit as $t \to \infty$, which yields the result.
\end{proof}

\subsection{Optimality system}
The differentiability of the solution operator allows us to derive an optimality system that characterizes local  optimal solutions of \eqref{eq:weight-opt}.

\begin{theorem}
Let \((u,w)\) be an optimal solution to problem \eqref{eq:weight-opt}. Then, there exists a Lagrange multiplier \(p \in L^2_c (\Omgf)\) such that the following optimality system is satisfied.
\begin{equation}\label{eq:sys-weights-opt}
\begin{array}{llll}
\displaystyle\mu(u,\psi)_{V^w} + \big(\lambda(u-f),\psi\big)_{0,\Omg}=0,
   & \forall \psi \in V^w, \\[3mm]
\mu(p,\phi)_{V^w} + (\lambda\,p,\phi)_{0,\Omg} + \big(\nabla \ell(u),\phi\big)_{0,\Omg} =0,  	
   & \forall \phi \in V^w, \\[3mm]
\displaystyle\mu \int\limits_\Omgf\int\limits_\Omgf \Big[ \big(u(w)-u(w)'\big)(p-p')
\widetilde \gamma_{(h-w)}(\xb,\yb) \Big]d\yb\,d\xb \geq 0,
   & \forall h \in \Uad.
\end{array}
\end{equation}
\end{theorem}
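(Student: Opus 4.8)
The plan is to derive the optimality system by the standard Lagrangian/adjoint technique adapted to the directional-differentiability setting established above. Since the solution operator $S_w$ is only directionally differentiable (Proposition \ref{thm:z-equation} and the subsequent theorem), I would work with the reduced functional $j(w):=\ell(S_w w)=\ell(u(w))$ and exploit that, for any feasible direction $h\in\Uad$ (i.e.\ $h-w$ pointing into $\Uad$), the directional derivative exists and equals $\ell'(u)z^*$ where $z^*=S_w'(h-w)$ solves the linearized equation \eqref{eq:z-equation}. The first-order necessary condition at a local minimizer $(u,w)$ is then $j'(w;h-w)\ge 0$ for all $h\in\Uad$, which I would rewrite explicitly using the chain rule as $\big(\nabla\ell(u),z^*\big)_{0,\Omg}\ge 0$.

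First I would introduce the adjoint state $p\in V^w\subset L^2_c(\Omgf)$ as the unique solution of the adjoint equation
\begin{equation*}
\mu(p,\phi)_{V^w}+(\lambda\,p,\phi)_{0,\Omg}+\big(\nabla\ell(u),\phi\big)_{0,\Omg}=0,\quad\forall\phi\in V^w,
\end{equation*}
whose unique solvability follows from the coercivity of $(\cdot,\cdot)_{V^w}$ together with the uniform space equivalence noted after \eqref{eq:lower-level-w}, exactly as in Theorem \ref{thm: diff solution operator lambda}. Next I would test the linearized state equation \eqref{eq:z-equation} with $\phi=p$ and the adjoint equation with $\phi=z^*$. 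Subtracting these two identities eliminates the symmetric bilinear terms $\mu(z^*,p)_{V^w}$ and $\lambda(z^*,p)_{0,\Omg}$, leaving
\begin{equation*}
\big(\nabla\ell(u),z^*\big)_{0,\Omg}=\mu(u,p)_{\widetilde V},
\end{equation*}
where the right-hand side carries the linearized kernel $\widetilde\gamma_{(h-w)}$ from \eqref{eq:lin-kernel}. Substituting this into the variational inequality $\big(\nabla\ell(u),z^*\big)_{0,\Omg}\ge 0$ converts the reduced-gradient condition into the explicit third line of \eqref{eq:sys-weights-opt}, namely
\begin{equation*}
\mu\int_\Omgf\int_\Omgf\big(u(w)-u(w)'\big)(p-p')\,\widetilde\gamma_{(h-w)}(\xb,\yb)\,d\yb\,d\xb\ge 0,\quad\forall h\in\Uad.
\end{equation*}
Here I would use the linearity of the map $h\mapsto\widetilde\gamma_h$ in its subscript, which is immediate from \eqref{eq:lin-kernel}, so that testing in direction $h-w$ yields the kernel $\widetilde\gamma_{(h-w)}$.

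The main obstacle I anticipate is \emph{not} the algebraic adjoint manipulation but the rigorous passage from the directional derivative to the variational inequality, because $S_w$ is only \emph{directionally} (not Gateaux-linearly) differentiable and $\Uad$ is convex with bilateral box constraints. I would need to verify that the admissible directions $h-w$ generated by $h\in\Uad$ are genuinely feasible in the sense of Lemma \ref{thm:ut-norm-bound} (so that the chain rule $j'(w;h-w)=\big(\nabla\ell(u),S_w'(h-w)\big)_{0,\Omg}$ is justified), and that convexity of $\Uad$ guarantees $w+t(h-w)\in\Uad$ for small $t>0$, legitimizing the one-sided directional derivative at the minimizer. Once feasibility of all directions into the convex set $\Uad$ is secured, the inequality $j'(w;h-w)\ge 0$ is the direct consequence of local optimality, and the adjoint reformulation completes the proof.
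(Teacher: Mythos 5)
Your proposal is correct and takes essentially the same route as the paper: both form the reduced functional $j(w)=\ell(u(w))$, introduce the adjoint state via the same adjoint equation, pair the linearized equation \eqref{eq:z-equation} with the adjoint (your subtraction of the two tested identities is the same algebra as the paper's substitution $\psi=u'(w)h$) to obtain $j'(w)(h-w)=\mu(u,p)_{\widetilde V}$ with kernel $\widetilde\gamma_{(h-w)}$, and conclude with the variational inequality $j'(w)(h-w)\ge 0$ from the box constraints. Your explicit attention to feasibility of the directions $h-w$ (via convexity of $\Uad$, matching the directional-differentiability hypothesis of the preceding theorem) is a point the paper leaves implicit, but it does not change the argument.
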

\begin{proof}
Let the reduced cost functional be defined as
\begin{equation}\label{red-funct-weight}
j(w) := \ell\big(u(w)\big),
\end{equation}
where \(u(w)\) is the unique solution to the state equation \eqref{eq:lower-optimality-w} corresponding to \(w\). By taking the derivative of the reduced cost with respect to \(w\), in direction $h$, we get
\begin{equation*}
j'(w)h= \big( \nabla \ell (u(w)), u'(w)h\big)_{0,\Omg},
\end{equation*}
where \(u'(w)h\) is the unique solution to the linearized equation
\begin{equation*}
\mu\big(u'(w)h,\psi\big)_{V^w} + \mu\big(u(w),\psi\big)_{\tilde V}+\lambda \big(u'(w)h,\psi\big)_{0,\Omg}=0, \quad \forall \psi\in V^w.
\end{equation*}
As in \eqref{opt_system:scalar_lambda_b}, the adjoint equation is given by
\begin{equation}
\mu(p,v)_{V^w} + \lambda(p,v)_{0,\Omg}=-\big(\nabla \ell(u),v\big)_{0,\Omg}, 
\quad \forall v \in V^w,
\end{equation}
which is uniquely solvable by the same arguments as in Theorem \ref{thm:lower-level-wellp}. Thus we obtain
\begin{equation*}
j'(w)h= - \mu\big( u'(w)h,p \big)_{V^w} - \lambda \big(  u'(w)h,p\big)_{0,\Omg}.
\end{equation*}
Using the linearized equation and considering the box constraints on \(w\), we then get the first order necessary optimality condition
\begin{align}\nonumber
j'(w)(h-w) &=
\mu \int\limits_\Omgf\int\limits_\Omgf \Big[ \big(u(w)-u(w)'\big)(p-p')
\widetilde \gamma_{(h-w)}(\xb,\yb)  \Big]	d\yb\,d\xb \geq 0,
\end{align}
for all $h \in \Uad$.
\end{proof}
\begin{remark}
When $w$ is a scalar parameter, the last expression in the optimality system may be replaced with
\begin{align*}
P_{[0,W]} \Big( w - c \big(u,p\big)_{\hat V}  \Big) = w, \quad \forall c > 0,
\end{align*}
where $P_{[0,W]}$ is the standard projection operator onto the interval $[0,W]$ and
\begin{multline*}
(u,p)_{\hat V}:=\\ \mu \int\limits_\Omgf\int\limits_\Omgf \big(u-u'\big)(p-p') \gamma_{w}(\xb,\yb)
\left[ \int\limits_{B_\rho(\zerob)} - \big( f(\xb+ \taub)-f(\yb+\taub) \big)^2 ~d \taub \right]	d\yb\,d\xb.
\end{multline*}
This enables the use of projection algorithms for solving the bilevel problem.
\end{remark}

\section{Numerical tests}\label{sec:numerical-tests}
In this section we propose a numerical algorithm for the solution of the bilevel problem and illustrate our approach with several numerical tests. First, we describe the discretization of the optimality system and the technique used to evaluate the kernel. Then, we present the optimization algorithm which relies on a trust-region scheme with active set prediction and limited memory BFGS matrices. The section concludes with results of numerical computations illustrating the main features of the proposed approach.

\subsection{Discretization}\label{sec:FEM}
As we are interested in developing a second-order algorithm to solve the optimality system \eqref{opt_system:scalar_lambda}, we consider an \(H^1\)--Riesz representation of the derivative, $j'(\lambda)h$ of the reduced cost, where
\begin{equation}
j'(\lambda)h= \int_\Omega (u-f)p h ~dx + \beta (\lambda,h)_{H^1}.
\end{equation}
The Riesz representative is then given by the solution of the following equation
\begin{equation}
	(y,h)_{0,\Omg} + (\nabla y, \nabla h)_{0,\Omg} = \big( (u-f)p, h \big)_{0,\Omg} + \beta (\lambda,h)_{H^1}.
\end{equation}
%
Since we are interested in possibly discontinuous nonlocal solutions we use two finite element bases to approximate \(u\) and $p$ in \(L^2 (\Omg)\) and \(y,\lambda\) in \(H^1 (\Omg)\).  Specifically, we consider piecewise constant and piecewise {\color{black}bilinear elements}, respectively, defined over a partition of \(\Omgf\), which we denote as \(\mathcal{T}^h\).
Throughout this section we denote discretized quantities by using the superscript \(h\), and we fix \(\mu = \frac{1}{2}\).

%
{\color{black}
Let \(\mathcal{T}^h\) be a partition of \(\Omgf\) into regular non-overlapping {\color{black}rectangles}.
We consider the spaces
\begin{align}
	V^h_c 	&:= \big\{ \varphi \in L^2(\Omgf):\, \varphi\big|_{T} \in {\color{black}\mathbb P_0} \; \text{and} \; \varphi\big|_{\Omg_I} = 0, \, \forall\, T \in \mathcal{T}^h  \big\}, 	\\[0.5em]
	Y^h		&:= \big\{ \phi \in \mathcal{C}(\overline{\Omg}):\, \phi\big|_{T} \in {\color{black}\mathbb Q_1}, \, \forall\, T \cap \oOmg \in \mathcal{T}^h  \big\}.
\end{align}
{\color{black}where $\mathbb P_0$ indicates piecewise constant polynomials and $\mathbb Q_1$ piecewise bilinear ones.}

Note that \(V_c^h\) is a subspace of step functions and naturally \(V^h_c \subset V_c\). Without loss of generality, we consider unit volume elements. Now, for every rectangle \( T_i \in \mathcal{T}^h\), and letting \(u^h := \sum_{T_j \in \mathcal{T}^h} u_j^h \varphi_j\), we have that
\begin{subequations}
\begin{align*}
	\mu (u^h,\varphi_i)_{V_c^h} &=
	\int_{\Omega\cup\Omega_I} \int_{\Omega\cup\Omega_I} \big( u(\xb) - u(\yb)  \big) \gamma (\xb,\yb) \varphi_i(\xb) \, d\yb\,d\xb
	\\
	&= \int_{\Omega\cup\Omega_I} \int_{\Omega\cup\Omega_I}  \sum_{T_j \in \mathcal{T}^h} u_j^h \varphi_j (\xb) \varphi_i(\xb) \gamma (\xb,\yb)  \, d\yb\,d\xb
	\\
	&\qquad
	-\int_{\Omega\cup\Omega_I} \varphi_i(\xb)
	\int_{\Omega\cup\Omega_I} \sum_{T_j \in \mathcal{T}^h} u_j^h \varphi_j(\yb)   \gamma (\xb,\yb) \, d\yb\,d\xb.
\end{align*}
Since \( \varphi_i (\xb) \varphi_j(\xb) = 0 \), whenever \( i\neq j\), we get
\begin{align*}
	\mu (u^h,\varphi_i)_{V_c^h} &= \int_{T_i} \int_{\Omega}  u_i^h \varphi_j (\xb) \varphi_i(\xb) \gamma (\xb,\yb)  \, d\yb\,d\xb
	\\
	&\qquad
	-\int_{T_i} \varphi_i(\xb)
	\int_{\Omega\cup\Omega_I} \sum_{T_j \in \mathcal{T}^h}  u_j^h \varphi_j(\yb)   \gamma (\xb,\yb) \, d\yb\,d\xb
	\\
	&= u_i^h \int_{T_i} \int_{\Omega}  \gamma (\xb,\yb)  \, d\yb\,d\xb -\int_{T_i} \sum_{T_j \in \mathcal{T}^h}   u_j^h
	\int_{\Omega \cap T_j}  \gamma (\xb,\yb) \, d\yb\,d\xb.
\end{align*}
Now, let \(\gamma^h\) be a discrete approximation of the kernel in \(V_c^h\). Then
\begin{align*}
	\mu (u^h,\varphi_i)_{V_c^h}
	&= u_i^h \sum_{T_j \in \mathcal{T}^h}  \gamma^h_{i,j}   -  \sum_{T_j \in \mathcal{T}^h}   u_j^h  \gamma^h_{i,j}.
\end{align*}
Notice that \((u^h,\varphi_i)_{V_c^h} = 0\) whenever \( T_i \subset \Omega_I\). Here we recall that a constant factor coming from the discretization of \eqref{ec:non_local_product} is cancelled with \(\mu\).
\end{subequations}
}

\medskip
Choosing $w({\bf t})=\delta^{-2}$, the discrete analogue of the optimality system is given by
\begin{subequations}\label{sys:nonlocal-by-triangles}
\begin{align}
	\widetilde\lambda^h_i  u^h_i + \eta_i u^h_i - \sum_{T_j\in \mathcal{T}} u^h_j \gamma^h_{i,j}  &= \widetilde\lambda^h_i f^h_i,
	&&\forall T_i \in \mathcal{T}^h,\\
	\widetilde\lambda^h_i  p^h_i + \eta_i p_i^h - \sum_{T_j\in \mathcal{T}} p^h_j \gamma^h_{i,j} &= u_{T,i} - u^h_i,		&&\forall T_i \in \mathcal{T}^h;
\end{align}
\end{subequations}
where $u^h_i$, $p_i^h$,  \(\widetilde\lambda^h_i\), and $f_i^h$ are the values of the approximate nonlocal state, nonlocal adjoint, fidelity weight, and forcing term at triangle $T_i$, and
\(\gamma^h_{i,j}\) is the value of the approximate kernel for $\xb\in T_i$ and \(\yb\in T_j \), and \(\eta_i := \sum_{T_j} \gamma^h_{i,j} \). The evaluation of $f_i^h$, $\widetilde\lambda_i^h$ and $\gamma_{i,j}^h$ depends on the location of the pixels and is described in detail in the next section.

We rewrite system \eqref{sys:nonlocal-by-triangles} in a more compact form as follows
\begin{subequations}
\label{eq:discretized-os}
\begin{align}
	\big(\diag(\lambdab) + \diag(\etab) -  \Gamma\big) \ub &= \lambdab\circ\fb,
	\\
	\big(\diag(\lambdab) + \diag(\etab) -  \Gamma\big) \pb &= \ub_T - \ub,
\end{align}
\end{subequations}
where, for a vector $\vb$, \(\diag(\vb)\) is the \(n\times n\) diagonal matrix whose diagonal entries are the components of \(\vb\), and \(\circ\) is the Hadamard product, i.e., \(\vb\circ \wb := \begin{pmatrix} v_1 w_1 , \ldots , v_n w_n \end{pmatrix}^\top \), for any two vectors \( \vb,\wb \in \mathbb{R}^n \).
The bold notation refers to the vectors whose components are the values of the variables at the DOFs. The matrix $\Gamma$ is such that $\Gamma_{i,j}:=\gamma_{i,j}^h$.

The equation for the gradient of the reduced cost functional for problem \eqref{eq:bilevel_reg_prob} is discretized as
\begin{align*}
    (A+B) \yb         &= F(\ub, \lambdab), \quad\qquad\qquad \text{where} 
    \\[4mm]
     A_{i,j}          &= \int_\Omg \nabla \phi_i \nabla \phi_j d\xb, \qquad
    B_{i,j}            =  \int_\Omg  \phi_i \phi_j d\xb, \qquad \text{and}
    \\[4mm]
    F(\ub,\lambdab)_i &= \int_\Omega \big( (u^h-f^h)p^h + \beta \lambda^h \big)
              \phi_i d\xb + \sum_{j=1}^n \lambda^h_i \int_\Omega 
              \nabla \phi_i \cdot \nabla \phi_j  d\xb
\end{align*}
being \(\phi_i\) and \(\phi_j\) elements of the finite element basis associated with triangles \(T_i\) and \(T_j\), respectively, and $\lambda_i^h$ the values of the finite element solution $\lambda^h$ at the degrees of freedom.

Second, we consider the optimization with respect to $w$; we let \(\lambda\in \mathbb{R}^+\) and \(w\in\mbR^+ \cup \{0\} \). Thus, {\eqref{eq:sys-weights-opt} becomes}
\begin{subequations}
\label{eq:discretized-os-weights}
\begin{align}
\big(\lambda I_n + \diag(\etab_w) -  \Gamma_w\big) \ub_w &= \lambda \fb \\
\big(\lambda I_n + \diag(\etab_w) -  \Gamma_w\big) \pb_w &= \ub_T - \ub_w,
\end{align}
\end{subequations}
where we use the sub-index \(w\) to indicate the dependency of the kernel on \(w\). The gradient of the reduced cost functional for problem \eqref{eq:weight-opt} is discretized as
\begin{equation}\label{ec:disc-weight-derivative}
j'(w)^h= \pb_w \cdot \big(\diag( \widehat \etab) - \widehat \Gamma_w\big)\ub_w.
\end{equation}
for which \(\widehat\etab_i := \sum_{T_j} \widehat\Gamma_{w,i,j}\) and \( \widehat \Gamma_{w,i,j} = \widehat\gamma_{w,i,j}^h\) is a discretization of \( \widetilde \gamma_{(1)} \), defined in \eqref{eq:lin-kernel}.

\smallskip
Finally, we mention that in our numerical experiments we precondition the nonlocal systems by the following diagonal precondititoner $P$: let \(a_{i,j}\) be the entries of the matrix of the system for \(u^h\), we have:
$$
P_{i,i} := \left( \sum_{j} a_{i,j}^2 \right)^{-1/2}.
$$
Using this preconditioner, we solved the nonlocal systems with the Loose Generalized Minimal Residual Method (LGMRES).

\subsubsection{Evaluating the modified nonlocal means kernel}
For a given image \( f:\Omega \mapsto [0,255]\), where \(\Omg = [0,N]\times[0,M]\), the evaluation of the modified nonlocal means kernel requires the identification of several patches within the image. In this section, we describe how to define those regions and efficiently evaluate the kernel. Since the kernel decays exponentially away from the origin, we consider a relatively small radius $\rho$. Also, {\color{black} note that for both the optimization with respect to $\lambda$ and $w$ we only consider constant weights, i.e. $w({\bf t})=w\in\mbR^+$. This fact is particularly helpful for a computationally cheap evaluation of the corresponding matrix for different values of $w$; in fact, in this setting we have $\gamma_w(\xb,\yb)=\gamma_2(\xb,\yb)^w$, i.e. the weighted kernel is the $w$-th power of the nonlocal means kernel (as defined in \eqref{model: nonlocal means}) whose corresponding matrix can be computed once and for all.}

By definition, \(\Omgi = [-\veps,N+\veps]\times [-\veps,M+\veps] \, \setminus \, \Omega\); we extend \(f\) to zero outside \(\Omg\). 
{\color{black}We label each pixel by the integer $\ib$;} as illustrated in Figure \ref{fig:Pixels}, pixel $\ib$ is located at the upper-left corner of the element $T_i\in\mathcal T^h$ so that every element is associated with one pixel. 

In this setting, the approximation $\widetilde\lambda_i^h$ introduced above consists in the value of $\lambda^h$ at the pixel corresponding to element $T_i$, i.e. pixel $\ib$.
\begin{figure}[H]
 \centering
 \includegraphics[width=0.8\textwidth]{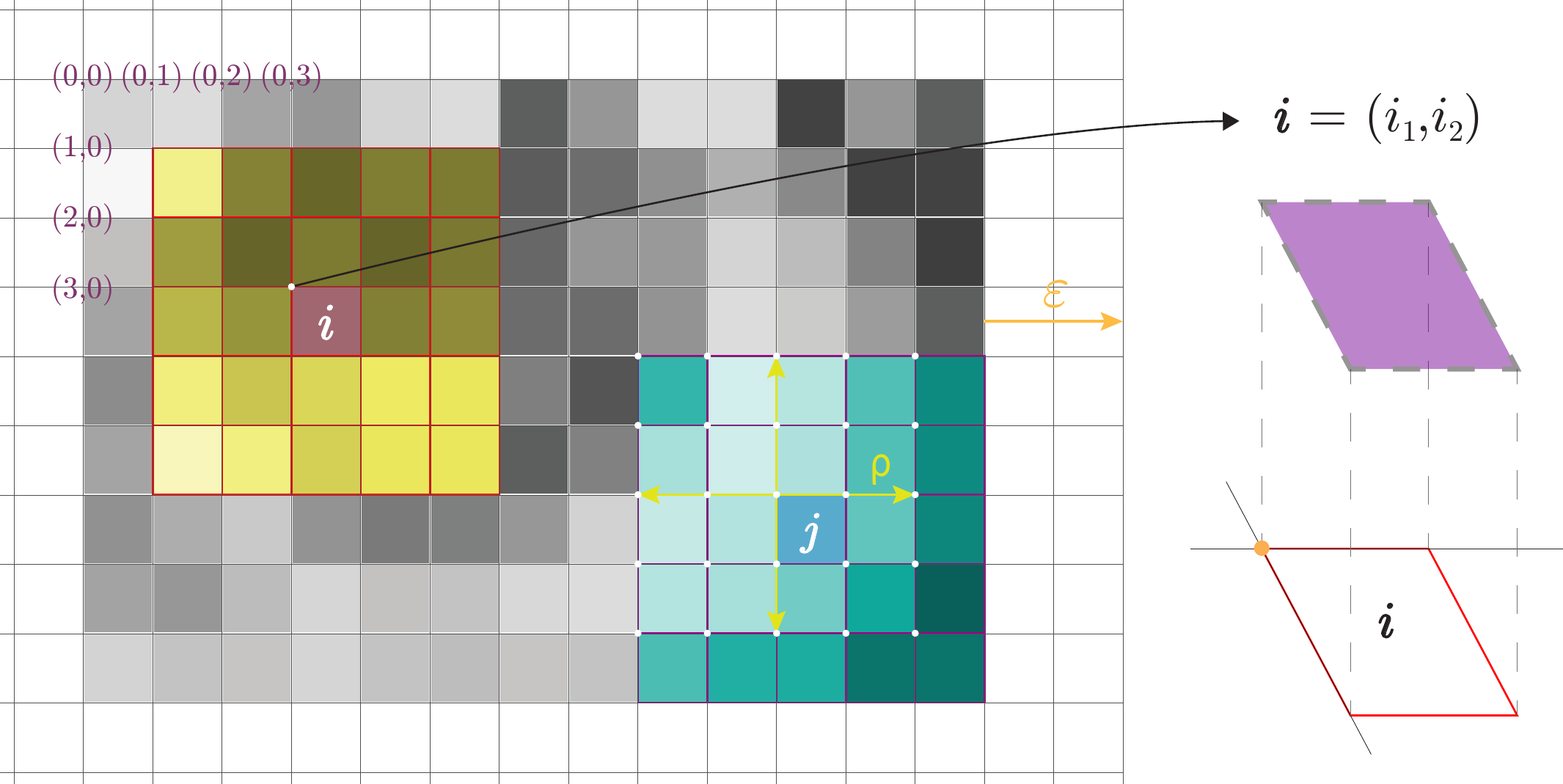}
 \\[-0.3em]
 \caption{Description of patches and pixel discretization in relation to the finite element grid. On the right, a pixel is depicted with its associated element, they are rotated and scaled for illustrative purposes.}
 \label{fig:Pixels}
\end{figure}
Let $f^h$ be an approximation of $f$ in the computational domain such that $f^h_i=f(\ib)$, i.e., the value of the approximate image over the element $T_i$ corresponds to the value of the image at pixel $\ib$.
A \emph{patch} \( \Px{\ib}{f} \)  is a sub-image of \(f^h\) around pixel \(\ib\) given by
\[
\Px{\ib}{f} ({\boldsymbol t}) = f^h( \ib + {\boldsymbol t}), \qquad \forall {\boldsymbol t} \in [-\rho:\rho]^2,
\]
where the interval \([a:b]\) denotes the closed interval of integers from \(a\) to \(b\).

We refer to the sum of the image values within a patch as the \emph{patch measure} and denote it by \(\px{\ib}{f}\). Notice that a patch will have exactly \((2\rho+1)^2 =: | \mathcal{P}|\) pixels. We approximate the value of the kernel in \eqref{eq:w-kernel} at points corresponding to pixels $(\ib,\jb)$ as follows:
\begin{equation}
\begin{aligned}
\label{NLM-approx-FEM}
\gamma^h_{\ib,\jb} &\approx
\exp\Bigg\{
-w \bigg(\px{\ib}{f^2} + \px{\jb}{f^2}
		-2\sum_{{\boldsymbol t} \in [-\rho:\rho]^2}  \Px{\ib}{f} ({\boldsymbol t}) \circ \Px{\jb}{f} ({\boldsymbol t}) \bigg)  \Bigg\}\\
	&\hspace{18em}
	\chi\big(\jb\in B_\veps(\ib) \cap \gamma^h_{\ib,\jb} > \iota \big).
\end{aligned}
\end{equation}
This serves as an approximation of \(\gamma^h_{i,j}\) in \eqref{sys:nonlocal-by-triangles}, where elements are associated with the corresponding pixels.

As noted in \cite{Sharifymoghaddam2015}, high dissimilarity values between each pair of patches do not provide meaningful information to the resulting image restoration process. Therefore, in \eqref{NLM-approx-FEM} we introduce the threshold parameter \(\iota>0\) that acts as an acceptance tolerance between patches.
Furthermore, we consider large interaction radii, i.e. \(\veps\gg 1\). This constraint induces a multi-banded matrix approximation of the nonlocal operator with \(\veps-1\) bands yielding at most \( (2\veps+1)^2 =: \mathcal{E}\) neighbors per pixel. These two constraints ensure that only close and similar regions of the image are compared and, at the same time, it reduces memory allocation and computational cost. As a consequence, the nonlocal kernel can be evaluated in \( \mathcal{O} \Big( | \mathcal{P}| \big[ NM(1+\mathcal{E}) - \mathcal{E} \big] \Big) \) operations. To see this, first notice that by symmetry we only need the lower triangular part of the matrix. There, \( \mathcal{O} \big( | \mathcal{P}| NM \big)\) operations are needed to compute the patch measure of the squared values of each patch. Now, if \(\ib > \mathcal{E} \), then the Hadamard product and sum between each pair of patches takes \( \mathcal{O} \big( | \mathcal{P}| \big) \) operations. After these quantities have been evaluated, \eqref{NLM-approx-FEM} reduces to a comparison and an exponential of a product between scalars which take \( O(1) \). As we only need to compare \( \mathcal{E}/2 \) neighbors, we get \( \mathcal{O} \big( | \mathcal{P}| \mathcal{E} /2\big) \). Thus, the number of operations needed to compute the lower part \( \ib > \mathcal{E} \) is \( \mathcal{O} \big( | \mathcal{P}| (NM - \mathcal{E} ) \mathcal{E}/2 \big) \). For the case \( \ib \leq \mathcal{E} \), we proceed as before and note that the number of neighbors increases with each row. This takes \( \mathcal{O} \big( | \mathcal{P}| \mathcal{E} (\mathcal{E} + 1)/2 \big) \).

\subsubsection{\color{black}Optimization algorithm}\label{sec:PLMTR-algorithm}
The reduced objective functionals \eqref{red-funct-parameter} and \eqref{red-funct-weight} are not necessarily convex since equations \eqref{eq:discretized-os} and \eqref{eq:discretized-os-weights} are nonlinear in terms of {\( (u,\lambda) \) and \( (u,w) \)}, respectively. Thus, we resort to 
the projected trust-region algorithm developed in \cite{Yuan2014} for the solution of general nonlinear box-constrained optimization problems involving limited memory BFGS matrices.
\subsection{Experimental results}\label{sec:implementation-results}

We present the results of the bilevel optimization with respect to \(\lambda\) and \(w\) using the modified nonlocal means kernel. The results are organized as follows: for each test we report a figure and a table. The figure displays the clean image \(u_T\) from a database, four noisy images, (a)--(d), and the corresponding (optimal) denoised images, (e)--(h).
Values of the Structural Similarity Index (SSIM), which measure the similarity of the recovered image against \(u_T\), are also included (rounded up to two digits).
In the table we report optimal SSIM values and output parameters of the optimization.

For our computations, we use images from the USC-SIPI Image Database and the FVC2000 Database, which are padded with a border of width \(\veps\), in order to deal with information in \(\Omgi\). For each image, a sample of four noisy images is obtained by adding different levels of Gaussian noise with standard deviation \(\sigma\); that is \( f = u_T + \eta\) with \( \eta \sim \mathcal{N}(0,\sigma^2)\). The values of \(\sigma\) are taken according to Table \ref{Num-Parameters}.
We use the constant patch radius \(\rho=5\), i.e., each patch contains \(121\) pixels and we set the interaction radius \(\veps\) so that there are at most \(5 \min\{N,M\}\) neighbors per pixel. The problem dimension $N$ and $M$ will be specified below for each experiment.
\begin{table}[h]
\caption{Parameters associated to noisy data}
\begin{center}
\fontsize{9.5}{8}\selectfont
\setlength{\tabcolsep}{4.5pt}
\def\arraystretch{2.5}
\sisetup{table-format = 1.3e2, table-number-alignment = center}
\begin{tabular}{l c c c c}
\toprule
	\bf Sample &	(a) & (b) & (c) & (d)
	\\ \hline
	\bf $\sigma^2$	& \num{e1.5} & \num{e2.} & \num{e2.5} & \num{e3.}
	\\
	\bf Filtering $\delta$	& 
	\num{3e1} & \num{e2} & \num{3.16e2} & \num{3.33e2}
	\\
\bottomrule
\\[-1em]
\end{tabular}
\label{Num-Parameters}
\end{center}
\end{table}

\subsubsection{Optimizing the fidelity parameter \(\lambda\)}
We consider both the case of constant \(\lambda\in[0,b]\) and space-dependent \(\lambda\in\mcC = \big\{\lambda\in H^1(\Omega):b\geq \lambda(\xb) \geq 0\big\}\). In the former case, the upper bound \(b\) is set to \(\num{e5}\), while in the latter it is set to \(255\). The acceptance tolerance is set to \(\iota = 10^{-9}\). Recall that in this case we set $w(\tb)=\delta^{-2}$; values for each image are reported in Table \ref{Num-Parameters}.
\paragraph{Constant parameter}
We initialize the TR algorithm with \(\lambda_0 = 100\) and we note that, for $\lambda\in[0,\Lambda]$, the gradient of the reduced cost functional \eqref{red-funct-parameter} reduces  to \(\nabla j(\lambda) = (\ub-\fb) \cdot \pb\).

The results are displayed in Figure \ref{Scalar-opt-imgs} and Table \ref{Scalar-opt}. In the latter we report, for each clean image and its corresponding noisy sample, the optimal \(\lambda\), its SSIM value, the number of iterations of the TR algorithm, 
and the dimensions of the image. 
From Figure \ref{Scalar-opt-imgs} we note that, after the optimization, there is a significant increase in the SSIM values. Moreover, as expected, the nonlocal means kernel allows denoising of each sample while preserving textures (see, e.g., \cite{gilboa2007nonlocal}). Hence, discontinuities are preserved and restored. 
Furthermore, in Table \ref{Scalar-opt} it is noticeable that the the best solution found for each noisy image is located in the interior of the convex set.

We also note that, at each iteration, the objective function decreases monotonically and the radius of the trust-region decreases around the solution.

\captionsetup[sub]{font=footnotesize, justification=centering}
\begin{figure}[H]
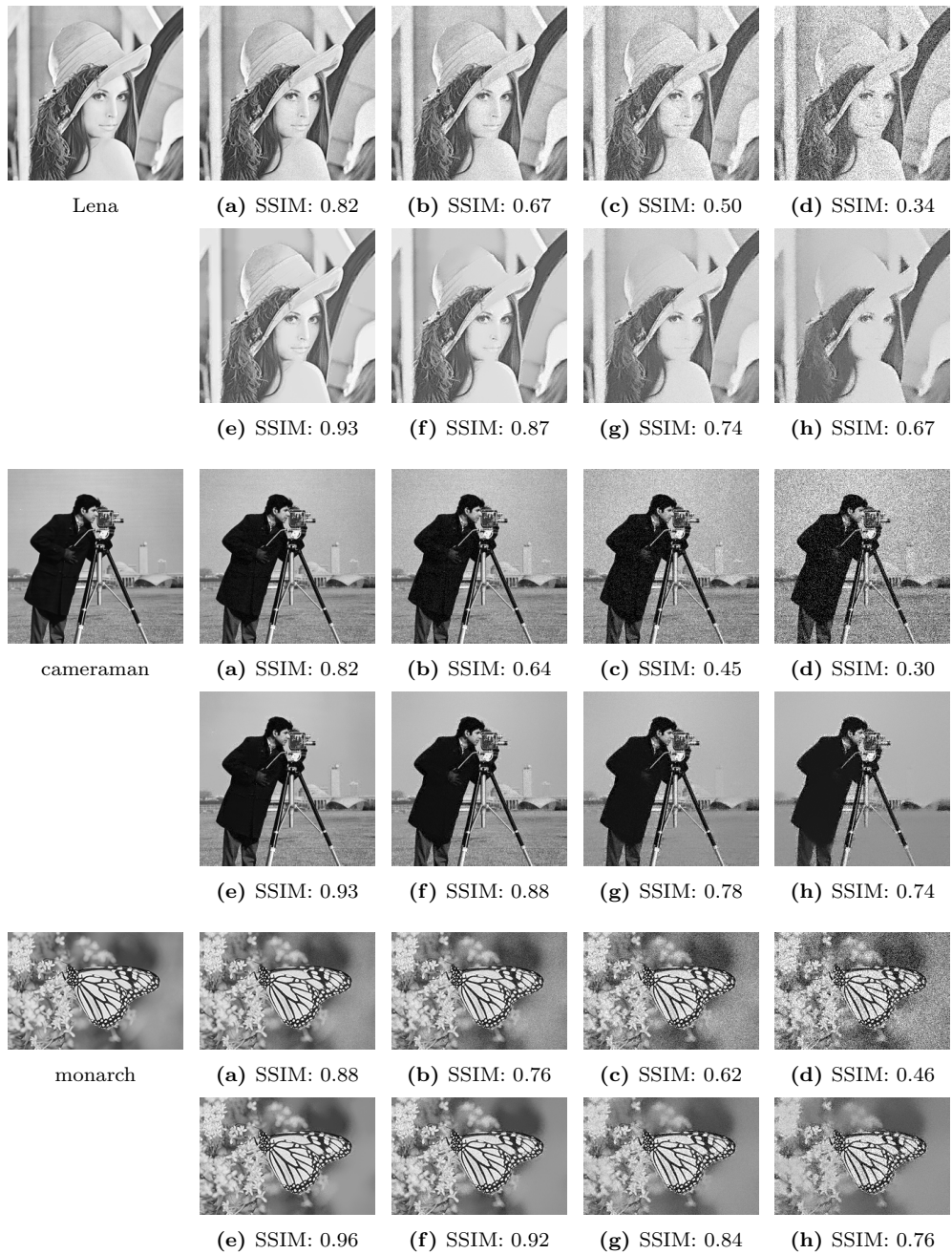

	\centering
	\picb{Lena}{0.185}{1-lena}{lena-bn}	\pic{0.82}{0.185}{1-lena}{1-lena-n}
	\pic{0.67}{0.185}{1-lena}{2-lena-n}	\pic{0.50}{0.185}{1-lena}{3-lena-n}	\pic{0.34}{0.185}{1-lena}{4-lena-n}
	\\[-0.7em]
	\picc{}{0.185}		
	\picf{0.93}{0.185}{1-lena}{Scalar}{1-lena-d}		\picf{0.87}{0.185}{1-lena}{Scalar}{2-lena-d}
	\picf{0.74}{0.185}{1-lena}{Scalar}{3-lena-d}		\picf{0.67}{0.185}{1-lena}{Scalar}{4-lena-d}
	\addtocounter{subfigure}{-8}
	\picb{cameraman}{0.185}{4-cameraman}{cameraman-bn}
	\pic{0.82}{0.185}{4-cameraman}{1-cameraman-n}	\pic{0.64}{0.185}{4-cameraman}{2-cameraman-n}
	\pic{0.45}{0.185}{4-cameraman}{3-cameraman-n}	\pic{0.30}{0.185}{4-cameraman}{4-cameraman-n}
	\\[-0.7em]
	\picc{}{0.185}		\picf{0.93}{0.185}{4-cameraman}{Scalar}{1-cameraman-d}	\picf{0.88}{0.185}{4-cameraman}{Scalar}{2-cameraman-d}
					\picf{0.78}{0.185}{4-cameraman}{Scalar}{3-cameraman-d}  \picf{0.74}{0.185}{4-cameraman}{Scalar}{4-cameraman-d}
	\addtocounter{subfigure}{-8}
	\picb{monarch}{0.185}{5-monarch}{monarch-bn}
	\pic{0.88}{0.185}{5-monarch}{1-monarch-n}		\pic{0.76}{0.185}{5-monarch}{2-monarch-n}
	\pic{0.62}{0.185}{5-monarch}{3-monarch-n}		\pic{0.46}{0.185}{5-monarch}{4-monarch-n}
	\\[-0.7em]
	\picc{}{0.185}		\picf{0.96}{0.185}{5-monarch}{Scalar}{1-monarch-d}
					\picf{0.92}{0.185}{5-monarch}{Scalar}{2-monarch-d}
					\picf{0.84}{0.185}{5-monarch}{Scalar}{3-monarch-d}
					\picf{0.76}{0.185}{5-monarch}{Scalar}{4-monarch-d}
	\caption{Resulting images of scalar parameter optimization
	}
	\label{Scalar-opt-imgs}
\end{figure}

\newpage

\begin{center}
\fontsize{9.5}{8}\selectfont
\setlength{\tabcolsep}{4.5pt}
\def\arraystretch{1.5}
\begin{longtable}{c c  c  c c c  c  c  c  c  c  c c  c c}
\caption{Results of scalar optimization}\\
\toprule
	&\multirow{2}{*}{\bf Sample} &\multirow{2}{*}{\bf Best $\lambda$} & \multirow{2}{*}{\bf SSIM} &  \multirow{2}{*}{\bf PSNR}	& \bf Iteration	    
	& \multirow{2}{*}{$(N,M)$}
	\\
	&&&   && \bf Count 
\\
\hline
\multirow{4}{*}{\rotatebox{90}{Lena}}
& (a)	& 0.950959118335018 	& 0.9279 & 34.84 	& 26	    
& \multirow{4}{*}{$(256,256)$}	\\
& (b)	& 36.30572679235037 	& 0.8734 & 30.78 	& 7	
\\
& (c)	& 374.3532501043157	& 0.7443 & 26.85 	& 17    
\\
& (d)	& 165.5679531767198 	& 0.6725    & 23.87 	& 15    
\\
\hline
\multirow{4}{*}{\rotatebox{90}{cameraman}}
& (a)	& 45.40495436349488	& 0.9291    & 35.13	& 7    
& \multirow{4}{*}{$(256,256)$}	\\
& (b)	& 197.5505793039822	& 0.8802	& 30.97 & 21    
\\
& (c)	& 325.0933609572109 & 0.7847	& 27.73 &13    
\\
& (d)	& 80.29128369206099	& 0.7374	& 24.66 & 4    
\\
\hline
\multirow{4}{*}{\rotatebox{90}{monarch}}
& (a)	& 8.67751855730995	& 0.9592    & 34.30	& 19     
& \multirow{4}{*}{$(256,171)$}	\\
& (b)	& 70.93045929379818	& 0.9187	& 29.67 & 7    
\\
& (c)	& 274.0138712304279 & 0.8356	& 25.90 & 19     
\\
& (d)	& 126.1323535975534	& 0.7551	& 22.24 & 9    
\\
\bottomrule
\\[-1em]
\label{Scalar-opt}
\end{longtable}
\end{center}


\paragraph{Spatially-dependent parameter}

The optimization with respect to a spatially-de\-pendent \(\lambda\) is a large scale nonconvex problem. Thus, to prevent stagnation in regions far from local minima, we restart the optimization in the following two cases \cite{Maggiar2018,Cartis2018}:
\begin{enumerate}
	\item {\color{black}The trust region radius becomes smaller than a prescribed tolerance. In such case, we reset its value and} continue iterating if there is a decrease in the objective function. This is done in order to prevent algorithm to halt at a non-stationary point, whenever the trust region radius decreases too quickly.

	\item 
	{\color{black}The curvature condition is too close to zero. In this case, then all the previously stored pairs of derivates and evaluation points are removed and rebuilt from scratch.} This prevents the occurrence of ill-conditioned updates. 
\end{enumerate}
Moreover, after each successful update of the limited memory algorithm, we modify the L-BFGS initialization parameter, 
using Dener and Munson scalar initialisation approach \cite{Dener-2019}. 
We set the maximum number of iterations to \(\num{e2}\) initializing with the constant candidate \(\lambda_0 = 200\).

The results are displayed in Figure \ref{Spatial-opt-imgs} and Table \ref{Spatial-opt}.
In addition to the noisy sample and its corresponding set of solutions to each image \(u_T\) of the database, we also include a third row of images displaying the optimal $\lambda(\xb)$.
In the table we report the optimal SSIM value
and the dimensions of the image.

In Figure \ref{Spatial-opt-imgs}, we note that the optimal SSIM is much higher than the one associated with the noisy image and that there is a significant improvement compared to results obtained with a constant $\lambda$. We also observe that the optimal parameter is able to catch discontinuities and noise, see in particular (c) and (d).

\vfill
\newpage
	\begin{center}
	\fontsize{9.5}{8}\selectfont
	\setlength{\tabcolsep}{4.5pt}
	\def\arraystretch{1.5}
	\begin{longtable}{c c  c  c c c  c  c  c  c  c  c c  c c}
	\caption{Results of optimization with respect to $\lambda(\xb)$.}\\
	\toprule
		&\multirow{2}{*}{\bf Sample} &\multirow{2}{*}{\bf SSIM}	&\multirow{2}{*}{\bf PSNR} & \bf Iteration	 
		& \multirow{2}{*}{$(N,M)$}
		\\
		&&&&   \bf Count 
	\\[0.2em]
	\hline
	\multirow{4}{*}{\rotatebox{90}{Lena}}
	& (a)	& 0.9201    & 34.68 	& 77 	    
	& \multirow{4}{*}{$(256,256)$}	\\
	& (b)	& 0.9093    & 32.18 & 101 		
	\\
	& (c)	& 0.8051 	& 27.95 & 1000 	    
	\\
	& (d)	& 0.7702 	& 26.27 & 1000 	    
	\\
	\hline
	\multirow{4}{*}{\rotatebox{90}{cameraman}}
	& (a)	& 0.9366	& 35.27 & 36	    
	& \multirow{4}{*}{$(256,256)$}	\\
	& (b)	& 0.9115	& 32.25 & 84		    
	\\
	& (c)	& 0.8435	& 29.02 & 300	    
	\\
	& (d)	& 0.7864	& 26.84 & 162	    
	\\
	\hline
	\multirow{4}{*}{\rotatebox{90}{monarch}}
	& (a)	& 0.9656 & 34.57 & 146	
	& \multirow{4}{*}{$(256,171)$}	\\
	& (b)	& 0.9393	& 30.30 & 65		
	\\
	& (c)	& 0.8918 	& 27.00 & 192		
	\\
	& (d)	& 0.8342	& 23.75 & 431	    
	\\
	\bottomrule
	\\[-1em]
	\label{Spatial-opt}
	\end{longtable}
	\end{center}

\captionsetup[sub]{font=footnotesize, justification=centering}
	\begin{figure}[H]
		\centering
		\picb{Lena}{0.185}{1-lena}{lena-bn}	\pic{0.82}{0.185}{1-lena}{1-lena-n}
		\pic{0.67}{0.185}{1-lena}{2-lena-n}	\pic{0.50}{0.185}{1-lena}{3-lena-n}	\pic{0.34}{0.185}{1-lena}{4-lena-n}
		\\[-0.7em]
		\addtocounter{subfigure}{-4}
		\picc{}{0.185}		\picf{0.92}{0.185}{1-lena}{Spatial}{1-lena-d}		\picf{0.91}{0.185}{1-lena}{Spatial}{2-lena-d}
		\picf{0.81}{0.185}{1-lena}{Spatial}{3-lena-d}		\picf{0.77}{0.185}{1-lena}{Spatial}{4-lena-d}
		\\[-0.7em]
		\picc{}{0.185}		\picl{0.185}{1-lena}{Spatial}{1-lena-l}		\picl{0.185}{1-lena}{Spatial}{2-lena-l}
						\picl{0.185}{1-lena}{Spatial}{3-lena-l}		\picl{0.185}{1-lena}{Spatial}{4-lena-l}
		\\[-0.5em]
		\caption{Resulting images of spatial parameter optimization}
	\end{figure}
	
	\begin{figure}[H]
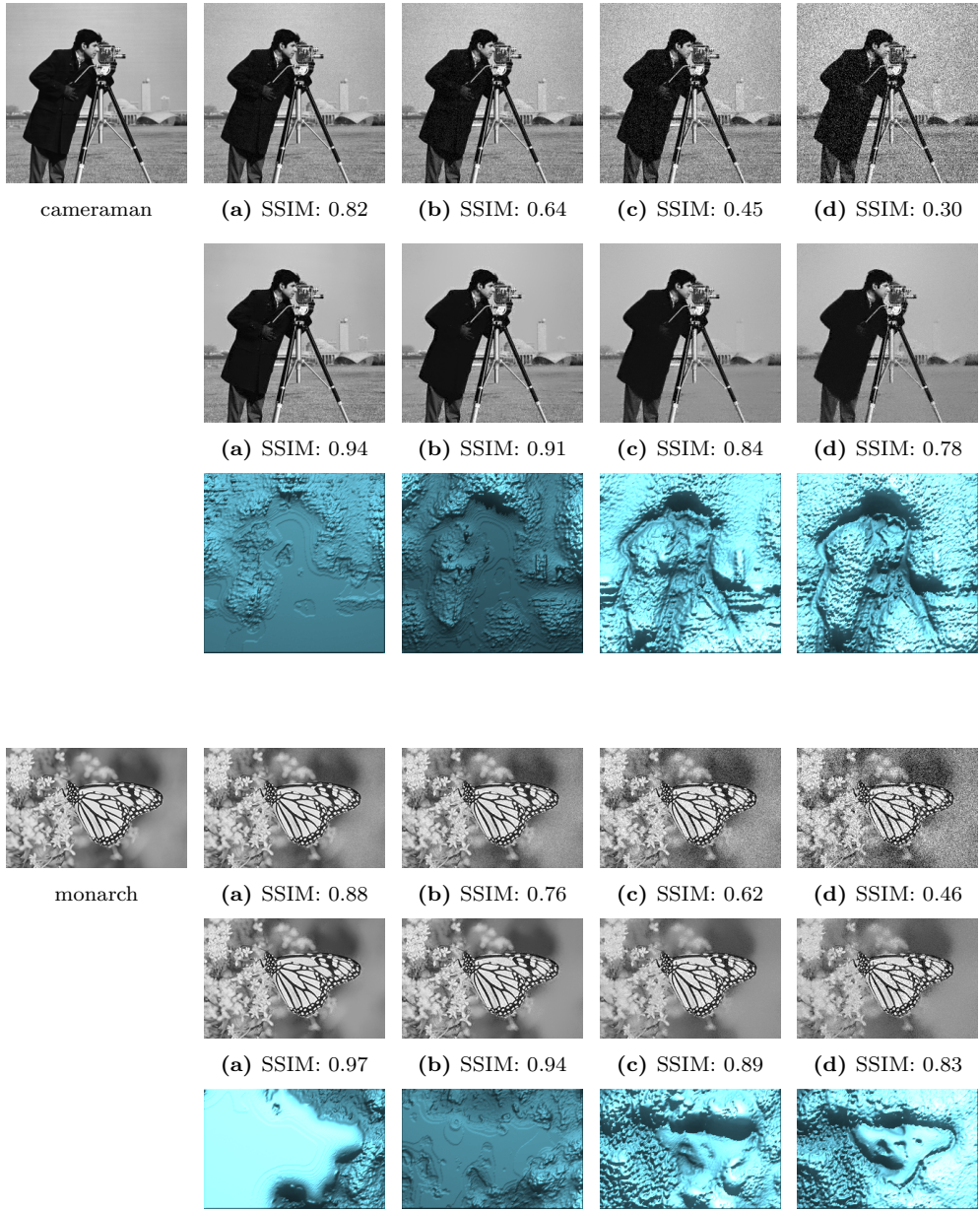
\ContinuedFloat 
		\centering
		\addtocounter{subfigure}{-4}
		\picb{cameraman}{0.185}{4-cameraman}{cameraman-bn}
		\pic{0.82}{0.185}{4-cameraman}{1-cameraman-n}	\pic{0.64}{0.185}{4-cameraman}{2-cameraman-n}
		\pic{0.45}{0.185}{4-cameraman}{3-cameraman-n}	\pic{0.30}{0.185}{4-cameraman}{4-cameraman-n}
		\\[-0.3em]
		\addtocounter{subfigure}{-4}
		\picc{}{0.185}		\picf{0.94}{0.185}{4-cameraman}{Spatial}{1-cameraman-d}	\picf{0.91}{0.185}{4-cameraman}{Spatial}{2-cameraman-d}
		\picf{0.84}{0.185}{4-cameraman}{Spatial}{3-cameraman-d}  \picf{0.78}{0.185}{4-cameraman}{Spatial}{4-cameraman-d}
		\\[-0.7em]
		\picc{}{0.185}		\picl{0.185}{4-cameraman}{Spatial}{1-cameraman-l}	\picl{0.185}{4-cameraman}{Spatial}{2-cameraman-l}
		\picl{0.185}{4-cameraman}{Spatial}{3-cameraman-l}  \picl{0.185}{4-cameraman}{Spatial}{4-cameraman-l}
		\\[1em]
		\addtocounter{subfigure}{-4}
		\picb{monarch}{0.185}{5-monarch}{monarch-bn}
		\pic{0.88}{0.185}{5-monarch}{1-monarch-n}		\pic{0.76}{0.185}{5-monarch}{2-monarch-n}
		\pic{0.62}{0.185}{5-monarch}{3-monarch-n}		\pic{0.46}{0.185}{5-monarch}{4-monarch-n}
		\\[-0.7em]
		\addtocounter{subfigure}{-4}
		\picc{}{0.185}		\picf{0.97}{0.185}{5-monarch}{Spatial}{1-monarch-d}
		\picf{0.94}{0.185}{5-monarch}{Spatial}{2-monarch-d}
		\picf{0.89}{0.185}{5-monarch}{Spatial}{3-monarch-d}
		\picf{0.83}{0.185}{5-monarch}{Spatial}{4-monarch-d}
		\\[-0.7em]
		\picc{}{0.185}		\picl{0.185}{5-monarch}{Spatial}{1-monarch-l}
						\picl{0.185}{5-monarch}{Spatial}{2-monarch-l}
						\picl{0.185}{5-monarch}{Spatial}{3-monarch-l}
						\picl{0.185}{5-monarch}{Spatial}{4-monarch-l}
		\\[-0.1em]
		\caption{Resulting images of spatial parameter optimization
		}
		\label{Spatial-opt-imgs}
	\end{figure}

\newpage

\paragraph{Training set}

We consider a batch learning approach: we are interested in learning the fidelity constant \(\lambda\) from several images with the same noise level by solving the following coupled optimization problem:
\begin{subequations}\label{eq:bilevel_tr1_prob}
\begin{equation}\label{eq:tr1_opt_prob}
	\min_{0 \leq \lambda \leq b} \; L(U)
\end{equation}

\vspace*{-.6cm}

\begin{equation}\label{eq:tr1_rest_denoising}
{\rm s.t.}	\quad \mu (u_i,\psi)_{V_i} + \lambda (u_i-f_i,\psi)_{0,\Omg}=0, \quad \forall \psi \in V_{i}, i \in I,
\end{equation}
where \(U = \{u_i\}_{i \in I}\) is a set of reconstructed images from a noisy sample \( F = \{f_i\}_{i \in I} \) and \(L\) is a generalization of the loss function \(\ell\), defined as
\begin{equation}\label{eq:tr_loss_fun}
	L(U) = \dfrac{1}{|I|} \sum_{i\in I} \ell (u_i) = \dfrac{1}{2 |I|} \sum_{i\in I} \|u_i-u_i^T\|^2_{0,\Omg},
\end{equation}
being \( u_T = \{u_i^T\}_{i\in I} \) a  set of clean images.
Finally, \(V_i\) is the function space associated with the kernel generated by the ground-truth image \(u^T_i\).
\end{subequations}

Following the analysis carried out for problem \eqref{eq:opt_prob_ESC}, we can further get a system of adjoint equations and a reduced derivative, resulting in the following optimality system
\begin{subequations}\label{opt_system:tr_scalar_lambda}
\begin{align}\label{opt_system:tr_scalar_lambda_a}
	\mu (u_i,\psi)_{V_{i}} + \lambda(u_i-f_i,\psi)_{0,\Omg} &=0, & \forall \psi &\in V_{i}, i \in I,
	\\
	\mu (p_i,\phi)_{V_{i}} + \lambda(p_i,\phi)_{0,\Omg} &= \dfrac{1}{|I|}(u_i^T - u_i, \phi)_{0,\Omg}, & \forall \phi &\in V_{i},	\label{opt_system:tr_scalar_lambda_b}
	\\
	P_{[0,b]} \left(\lambda -c \sum_{i\in I} (u_i-f_i,p_i) \right) &= \lambda, & \forall c&>0.
  \label{opt_system:tr_scalar_lambda_c}
\end{align}
\end{subequations}
The training set of clean and noisy images is constructed as follows: we select ten images \(\{u_i^T\}_{i\in I}\) from the FVC2000 Database and resize them down to \(203 \times 190\) pixels. Then, pixelwise, we add Gaussian noise of variance \(\sigma^2 = \num{e3}\) to obtain the noisy data \( \{f_i \}_{i\in I} \).

We initialize the TR algorithm with \(\lambda_0 = \num{e-1}\), \(b=\num{e10}\), and set the weight of the kernel to \(w = \num{9.3e-5}\). The results are displayed in Figure \ref{Training-opt-imgs} and Table \ref{Training-opt}, respectively. In the latter, we report the SSIM value of the reconstruction for each image.
After 20 iterations of the algorithm, the optimal value of \(\lambda\) was 10.6393411229.

In Figure \ref{Training-opt-imgs} we note an increase in the SSIM values of the denoised images, compared to the clean ones. 
%
Similarly, in Figure \ref{Test-opt-img} we show the outcome of the denoising algorithm (lower-level problem) for one noisy image that does not belong to the training set. The lower-level problem is solved in correspondence of the optimal (trained) \(\lambda\), and there is also an increment in the SSIM value.

\begin{figure}[H]
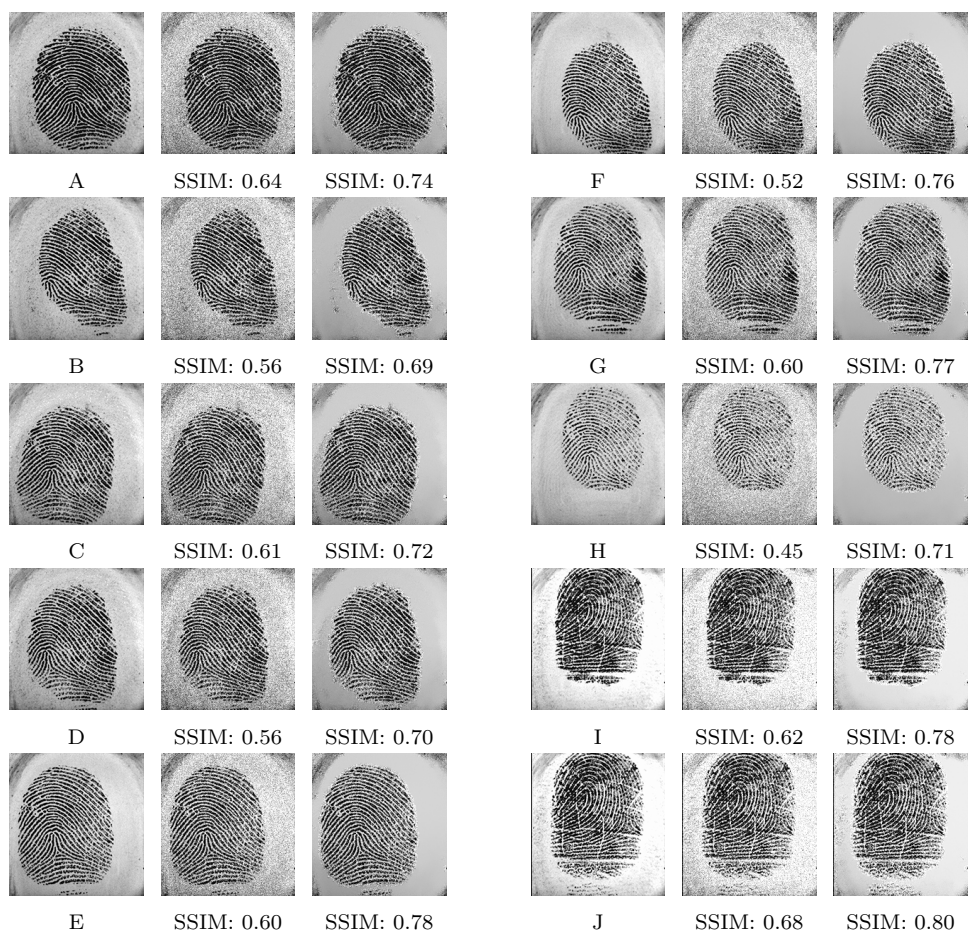

		\centering
		\picb{A}{0.135}{Fingerprints}{a}
				\picb{SSIM: 0.64}{0.135}{Fingerprints/Noisy}{a-n}	\picb{SSIM: 0.74}{0.135}{Fingerprints/Denoised}{a-d}
		\picc{}{0.05}
		\picb{F}{0.135}{Fingerprints}{f}
				\picb{SSIM: 0.52}{0.135}{Fingerprints/Noisy}{f-n}	\picb{SSIM: 0.76}{0.135}{Fingerprints/Denoised}{f-d}
		\\[-1em]
		\picb{B}{0.135}{Fingerprints}{b}
				\picb{SSIM: 0.56}{0.135}{Fingerprints/Noisy}{b-n}	\picb{SSIM: 0.69}{0.135}{Fingerprints/Denoised}{b-d}
		\picc{}{0.05}
		\picb{G}{0.135}{Fingerprints}{g}
				\picb{SSIM: 0.60}{0.135}{Fingerprints/Noisy}{g-n}	\picb{SSIM: 0.77}{0.135}{Fingerprints/Denoised}{g-d}
		\\[-1em]
		\picb{C}{0.135}{Fingerprints}{c}
				\picb{SSIM: 0.61}{0.135}{Fingerprints/Noisy}{c-n}	\picb{SSIM: 0.72}{0.135}{Fingerprints/Denoised}{c-d}
		\picc{}{0.05}
		\picb{H}{0.135}{Fingerprints}{h}
				\picb{SSIM: 0.45}{0.135}{Fingerprints/Noisy}{h-n}	\picb{SSIM: 0.71}{0.135}{Fingerprints/Denoised}{h-d}
		\\[-1em]
		\picb{D}{0.135}{Fingerprints}{d}
				\picb{SSIM: 0.56}{0.135}{Fingerprints/Noisy}{d-n}	\picb{SSIM: 0.70}{0.135}{Fingerprints/Denoised}{d-d}
		\picc{}{0.05}
		\picb{I}{0.135}{Fingerprints}{i}
				 \picb{SSIM: 0.62}{0.135}{Fingerprints/Noisy}{i-n}	\picb{SSIM: 0.78}{0.135}{Fingerprints/Denoised}{i-d}
		\\[-1em]
		\picb{E}{0.135}{Fingerprints}{e}
			 	\picb{SSIM: 0.60}{0.135}{Fingerprints/Noisy}{e-n}	\picb{SSIM: 0.78}{0.135}{Fingerprints/Denoised}{e-d}
		\picc{}{0.05}
		\picb{J}{0.135}{Fingerprints}{j}
				\picb{SSIM: 0.68}{0.135}{Fingerprints/Noisy}{j-n}	\picb{SSIM: 0.80}{0.135}{Fingerprints/Denoised}{j-d}
		\\[-1em]
		\caption{Resulting images of scalar parameter training}
		\label{Training-opt-imgs}
	\end{figure}

    \begin{figure}[H]
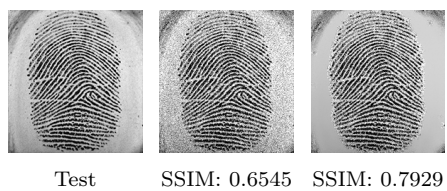

		\centering
		\picb{Test}{0.135}{Fingerprints/Test}{t_0}
				\picb{SSIM: 0.6545}{0.135}{Fingerprints/Test}{t_0-n}	\picb{SSIM: 0.7929}{0.135}{Fingerprints/Test}{t_0-d}
    \\[-1em]
		\caption{Validation image with trained parameter}
		\label{Test-opt-img}
	\end{figure}
	
\newpage
\begin{center}
\fontsize{9.5}{8}\selectfont
\setlength{\tabcolsep}{4.5pt}
\def\arraystretch{1.5}
\begin{longtable}{c c  c  c c c  c  c  c  c  c  c c  c c}
\caption{Results of batch training}
\\
\toprule
	\bf Sample & \bf SSIM & \bf PSNR	&& \bf Sample & \bf SSIM & \bf PSNR
\\
\hline
A	& 	0.7447 & 20.88	&&	F	& 	0.7632	& 21.40	\\
B	&  	0.6885 & 21.38	&&	G	& 	0.7662  & 20.71		\\
C	&  	0.7185 & 20.78	&& 	H	& 	0.7108 	& 21.55	\\
D	&  	0.7047 & 21.06	&&	I	& 	0.7801  & 20.90		\\
E	& 	0.7810 & 20.82	&&	J	& 	0.7981  & 20.49
\\
\bottomrule
\\[-1em]
\label{Training-opt}
\end{longtable}
\end{center}

\subsection{Optimizing with respect to the weight \(w\)}
\label{sec:implementation-weights}
We solve problem \eqref{eq:weight-opt} with \(w\in \mcC = [0,W]\). Note that every evaluation of the reduced objective functional \eqref{red-funct-weight} requires the numerical solution of \eqref{eq:lower-optimality-w}, and hence requires updating \(\gamma_w\). However, by definition, we have that \( \gamma_{w_1} = \gamma_{w_0}^{\nicefrac{w_1}{w_0}} \), which provides a fast way to get a new kernel for any \(w_1, w_0 \in \mcC\).

Additionally, the gradient of the reduced objective functional \eqref{red-funct-weight} requires the computation of the linearized kernel \(\widehat\gamma_{w}^h\), see \eqref{ec:disc-weight-derivative}. According to \eqref{NLM-approx-FEM}, we have
\begin{equation}\label{ec:disc-weight-der-expansion}
\begin{aligned}
\widehat\gamma_{w,\ib,\jb}^h 
& = \widetilde\gamma_{(1),\ib,\jb}^h \\
& = \gamma_{w,\ib,\jb}^h \, \cdot \bigg(    {-}\px{\ib}{f^2} - \px{\jb}{f^2}
  + 2\sum_{{\boldsymbol t} \in [-\rho:\rho]^2}  
    \Px{\ib}{f} ({\boldsymbol t}) \circ \Px{\jb}{f} ({\boldsymbol t}) \bigg)
\end{aligned}
\end{equation}
for all pixels \(\ib,\jb \in \mathcal{T}^h\).
Letting \( \widecheck{\gamma}_{i,j}^h = {-}\px{\ib}{f^2} - \px{\jb}{f^2}
  + 2\sum_{{\boldsymbol t} \in [-\rho:\rho]^2}  
    \Px{\ib}{f} ({\boldsymbol t}) \circ \Px{\jb}{f} ({\boldsymbol t})\),
\( \widehat\gamma_{w}^h \) can be easily computed by the Hadamard product \( \gamma_{w}^h \circ \widecheck{\gamma}^h\). 
Furthermore, \(\widecheck{\gamma}^h\) depends on the noisy image \(f\) only. Thus, it is computed once and for all and we have \(\gamma_w^h = \exp\{ -w \cdot \widecheck{\gamma}^h \}\).

As numerically, the exponential function has a limited exponent range which prevents the effects of underflowing and overflowing, care has to be taken whenever choosing \(W\) and \(\iota\). 
Considering that the entries of \(\gamma_w^h\) are in \([0,1]\), here we focus on avoiding underflow. This numerical condition occurs for images with high levels of noise, i.e., patches are highly dissimilar, resulting in a matrix with entries close to \(0\). Hence, if \(W\) is high, then the formula \( \gamma_{w_1} = \gamma_{w_0}^{\nicefrac{w_1}{w_0}} \) can return a constant matrix with no further possible updates. In contrast, if we make \(W\) small, then optimizing in images with low levels of noise, i.e., close-to-one patch distance, will result in an underestimation of the optimal value for \(w\). This comes as \(W\) can be taken as low such that it is accepted as optimal, whereas the best image reconstruction could require \(w\geq W\).
In order to avoid this behavior, we set \( W = K \max\{ \nicefrac{300}{\max \widecheck\gamma^h} , \nicefrac{5}{\kappa} \times \num{e-5} \} \) with \(K\) given as in Table \ref{Weight-Parameters} and \(\kappa\) is a scaling parameter introduced below. This value is chosen so that whenever the entries of \(\widecheck\gamma^h\) are small due to low levels of noise, cases (a) and (b), then \(w\) can be taken as big as some multiple of \(300\) that avoids underflow; and if the entries of \(\widecheck\gamma^h\) are big due to high levels of noise, cases (c) and (d), then the values of \(w\) will be again limited to avoid a constant matrix. Now, for the acceptance tolerance we set \(\iota = \num{e-10}\) which will be applied once for an initial kernel of parameter \(w_{-1} = \num{e-6}\). This allows us to keep entries that could be deleted whenever \( w > w_{-1}\), yet still remove entries with high dissimilarity values.
Additionally, as in practice the numerical range \(\mcC\) is small, we scale the argument of the objective function in order to further avoid cancellation errors whenever reaching a local minimum. For this, we set the scaling parameter \(\kappa = \num{e-6}\).
Finally, we set \(\lambda=\nicefrac 1 2\).

	\begin{table}[h]
		\caption{Parameters for weight optimization}
		\begin{center}
		\fontsize{9.5}{8}\selectfont
		\setlength{\tabcolsep}{4.5pt}
		\def\arraystretch{2.5}
		\sisetup{table-format = 1.3e2, table-number-alignment = center}
		\begin{tabular}{c c c c c}
		\toprule
			\bf Sample &	(a) & (b) & (c) & (d)
			\\ \hline
			\bf $K$ 	& 12 & 9 & 6 & 3
			\\
		\bottomrule
		\\[-1em]
		\end{tabular}
		\label{Weight-Parameters}
		\end{center}
	\end{table}

We initialize $w$ with \(w_0 = \num{1.1e-6}\). 
The corresponding results are presented in Figure \ref{Weight-opt-imgs} and Table \ref{Weight-opt}. For each clean image and its corresponding noisy sample, we report the optimal \(w\), the corresponding SSIM, the number of iterations of the TR algorithm, 
and the dimensions of the image. We again observe a significant increase in the SSIM values and that the optimal parameters are within the interior of the convex set \(\mcC\).

	\begin{figure}[H]
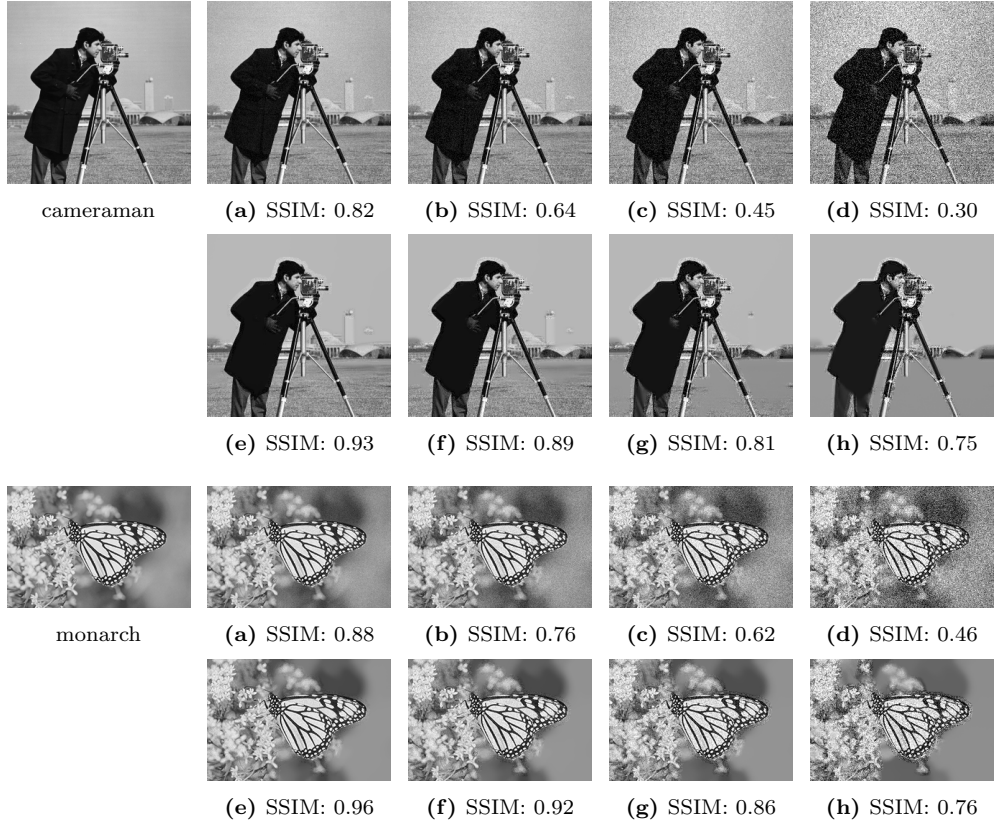
 
		\centering
		\picb{Lena}{0.185}{1-lena}{lena-bn}	\pic{0.82}{0.185}{1-lena}{1-lena-n}
		\pic{0.67}{0.185}{1-lena}{2-lena-n}	\pic{0.50}{0.185}{1-lena}{3-lena-n}	\pic{0.34}{0.185}{1-lena}{4-lena-n}
		\\[-0.7em]
		\picc{}{0.185}		\picf{0.93}{0.185}{1-lena}{Weight}{1-lena-d}	\picf{0.86}{0.185}{1-lena}{Weight}{2-lena-d}
		\picf{0.83}{0.185}{1-lena}{Weight}{3-lena-d}	\picf{0.75}{0.185}{1-lena}{Weight}{4-lena-d}
    \caption{Results for the kernel weight optimization}
	\end{figure}
	
	\newpage
	
	\begin{figure}[H] \ContinuedFloat 
		\addtocounter{subfigure}{-8}
		\picb{cameraman}{0.185}{4-cameraman}{cameraman-bn}
		\pic{0.82}{0.185}{4-cameraman}{1-cameraman-n}	\pic{0.64}{0.185}{4-cameraman}{2-cameraman-n}
		\pic{0.45}{0.185}{4-cameraman}{3-cameraman-n}	\pic{0.30}{0.185}{4-cameraman}{4-cameraman-n}
		\\[-0.7em]
		\picc{}{0.185}		\picf{0.93}{0.185}{4-cameraman}{Weight}{1-cameraman-d}
		\picf{0.89}{0.185}{4-cameraman}{Weight}{2-cameraman-d}
		\picf{0.81}{0.185}{4-cameraman}{Weight}{3-cameraman-d}
		\picf{0.75}{0.185}{4-cameraman}{Weight}{4-cameraman-d}
		\\
		\addtocounter{subfigure}{-8}
		\picb{monarch}{0.185}{5-monarch}{monarch-bn}
		\pic{0.88}{0.185}{5-monarch}{1-monarch-n}		\pic{0.76}{0.185}{5-monarch}{2-monarch-n}
		\pic{0.62}{0.185}{5-monarch}{3-monarch-n}		\pic{0.46}{0.185}{5-monarch}{4-monarch-n}
		\\[-0.7em]
		\picc{}{0.185}		\picf{0.96}{0.185}{5-monarch}{Weight}{1-monarch-d}
		\picf{0.92}{0.185}{5-monarch}{Weight}{2-monarch-d}
		\picf{0.86}{0.185}{5-monarch}{Weight}{3-monarch-d}
		\picf{0.76}{0.185}{5-monarch}{Weight}{4-monarch-d}
		\\
		\caption{Results for the kernel weight optimization}
		\label{Weight-opt-imgs}
	\end{figure}

	\begin{center}
	\fontsize{9.5}{8}\selectfont
	\setlength{\tabcolsep}{4.5pt}
	\def\arraystretch{1.5}
	\begin{longtable}{c c  c  c c c  c  c  c  c  c  c c  c c}
	\caption{Results of weight optimization
		}
	\\
	\toprule
		&\multirow{2}{*}{\bf Sample} &\multirow{2}{*}{\bf Best $w$} & \multirow{2}{*}{\bf SSIM}  & \multirow{2}{*}{\bf PSNR} 	& \bf Iteration	    
		& \multirow{2}{*}{$(N,M)$}
		\\
		&&&&   & \bf Count 
	\\
	\hline
	\multirow{4}{*}{\rotatebox{90}{Lena}}
	& (a)	& 0.0012423963428871366 	& 0.9283 & 34.71 & 54   
	& \multirow{4}{*}{$(256,256)$}	\\
	& (b)	& \num{5.880388893142e-04} 	& 0.8858 & 30.40 & 13   
	\\
	& (c)	& \num{3.219376261113e-04} 	& 0.8266 & 26.86 & 10   
	\\
	& (d)	& \num{1.467533224534e-04} 	& 0.7533 & 23.40 & 11   
	\\
	\hline
	\multirow{4}{*}{\rotatebox{90}{cameraman}}
	& (a)	& \num{6.695785895550e-04} 	& 0.9260	& 31.60 & 14  
	& \multirow{4}{*}{$(256,256)$}	\\
	& (b)	& \num{3.572893186495e-04}	& 0.8864	& 29.39 & 21  
	\\
	& (c)	& \num{1.239828976731e-04} 	& 0.8091	& 27.03 & 15  
	\\
	& (d)	& \num{4.826846435709e-05}	& 0.7459	& 24.70 & 9  
	\\
	\hline
	\multirow{4}{*}{\rotatebox{90}{monarch}}
	& (a)	& \num{9.938714024271e-04}	& 0.9617	& 33.36 & 10 	
	& \multirow{4}{*}{$(256,171)$}	\\
	& (b)	& \num{7.454035518203e-04}	& 0.9245	& 29.37 & 9    
	\\
	& (c)	& \num{4.969357012135e-04}	& 0.8565	& 25.05 & 9     
	\\
	& (d)	& \num{1.830612093756e-04}	& 0.7644	& 20.73 & 16    
	\\
	\bottomrule
	\label{Weight-opt}
	\end{longtable}
	\end{center}

\subsection{Comparison between methods}\label{sec:results-comparison}
Finally, we briefly compare the results obtained after optimizing problems \eqref{eq:opt_prob_ESC}, \eqref{eq:bilevel_reg_prob}, and \eqref{eq:weight-opt}, and compare them with total variation and total generalized variation denoising. For this purpose, we select an image of a zebra and add Gaussian noise with standard deviation $\sigma=\num{e3}$. Each result is displayed in Figure \ref{fig:Comparison}, where the SSIM value of the optimal image is also provided. Moreover, a close-up of each image is plotted, in order to compare the graphical differences of each method.

Visually, it is clear that total variation approaches do not perform as well as most of the nonlocal approaches. The well-know staircasing effect of total variation is present in the scene of the zebra. Moreover, the local approaches present an greater increase in brightness induced by noise. In (c) and (e), it is noticeable that some noise is retained in the zebra's head from underfitting which is corrected in (d).

\begin{figure}[H]
	\centering
	\addtocounter{subfigure}{-4}
	\picb{fprint3}{0.15}{Zebra}{zebra}
	%
	\pic{0.706}{0.15}{Zebra}{4-zebra-tv}
	\picf{0.716}{0.15}{Zebra}{}{4-zebra-tgv}
	\picf{0.728}{0.15}{Zebra}{}{4-zebra-scalar}
	\picf{0.795}{0.15}{Zebra}{}{4-zebra-spatial}
	\picf{0.741}{0.15}{Zebra}{}{4-zebra-weight}
	\\[-0.7em]
	\addtocounter{subfigure}{-5}
	\picb{}{0.15}{Zebra}{zoom-clean}
	\pica{TV}{0.15}{Zebra}{zoom-tv}
	\pica{TGV}{0.15}{Zebra}{zoom-tgv}
	\pica{Scalar \(\lambda\)}{0.15}{Zebra}{zoom-sc}
	\pica{Spatial \(\lambda\)}{0.15}{Zebra}{zoom-sp}
	\pica{Weight \(w\)}{0.15}{Zebra}{zoom-weight}
	\\[-0.3em]
	\caption{Comparison between local and nonlocal denoising methods
	\\[0.5em]
	\scriptsize
	(a) Total Variation denoising, (b) Total Generalized Variation denoising (c) Nonlocal denoising for scalar \(\lambda\), (d) Nonlocal denoising for spatially dependent \(\lambda\), (e) Nonlocal denoising for kernel scalar \(w\).
	}
	\label{fig:Comparison}
\end{figure}




\section{Acknowledgments}
MD was supported by Sandia National Laboratories (SNL), SNL is a multimission laboratory managed and operated by National Technology and Engineering Solutions of Sandia, LLC., a wholly owned subsidiary of Honeywell International, Inc., for the U.S. Department of Energys National Nuclear Security Administration contract number DE-NA0003525. This paper describes objective technical results and analysis. Any subjective views or opinions that might be expressed in the paper do not necessarily represent the views of the U.S. Department of Energy or the United States Government. Sandia report number SAND2020-4564. This material is based upon work supported by the U.S. Department of Energy, Office of Science, Office of Advanced Scientific Computing Research under Award Number DE-SC-0000230927 and the Collaboratory on Mathematics and Physics-Informed Learning Machines for Multiscale and Multiphysics Problems (PhILMs) project.

\newpage
\bibliographystyle{plain}
\bibliography{bibliography}

\end{document}